\pgfplotsset{compat=1.18}
\newcounter{rowcntr}[table]
\renewcommand{\therowcntr}{\thetable.\arabic{rowcntr}}
\newcolumntype{N}{>{\refstepcounter{rowcntr}\therowcntr}c}
\newtheorem{theorem}{Theorem}[section]
\newtheorem{lemma}[theorem]{Lemma}
\newtheorem{corollary}[theorem]{Corollary}
\newtheorem{proposition}[theorem]{Proposition}
\newtheorem{definition}[theorem]{Definition}
\newtheorem{remark}[theorem]{Remark}
\title{Model order reduction of an ultraweak and optimally stable variational formulation for
parametrized reactive transport problems}
\author[1]{Christian Engwer}
\author[1]{Mario Ohlberger}
\author[1]{\underline{Lukas Renelt}}
\affil[1]{University of M\"unster, Institute for Analysis and Numerics, Einsteinstr. 62, M\"unster, 48149, Germany,
\{christian.engwer, mario.ohlberger, lukas.renelt\}@uni-muenster.de}
\begin{document}

\makeatletter
\pgfplotsset{
  boxplot/hide outliers/.code={
    \def\pgfplotsplothandlerboxplot@outlier{}%
  }
}
\makeatother

\maketitle

\begin{abstract}
  This contribution introduces a model order reduction approach for an advection-reaction
  problem with a parametrized reaction function.
  The underlying discretization uses an ultraweak formulation with an $L^2$-like trial space and an
  `optimal' test space as introduced by Demkowicz et al. This ensures the stability of the
  discretization and in addition allows for a symmetric reformulation of the problem in terms of a dual
  solution which can also be interpreted as the normal equations of an adjoint least-squares problem.
  Classic model order reduction techniques can then be applied to
  the space of dual solutions which also immediately gives a reduced primal space.
  We show that the necessary computations do not require the reconstruction of any
  primal solutions and can instead be performed entirely on the space of dual solutions. We prove
  exponential convergence of the Kolmogorov $N$-width and show that a greedy algorithm produces
  quasi-optimal approximation spaces for both the primal and the dual solution space.
  Numerical experiments based on the benchmark problem of a catalytic filter confirm the applicability
  of the proposed method.
\end{abstract}

\section{Introduction}\label{sec:introduction}
Problems involving transport phenomena occur in many applied sciences \ie physics, medicine, geosciences
or chemistry. While in many cases there exist well-established numerical schemes to compute approximate
solutions to these problems, the computational effort required can become quite large in realistic
scenarios. This is a particular problem if the underlying equation additionally depends on a set of
parameters and solutions are required for many different parameter combinations. Some classic examples
of such a requirement include real-time-feasible simulations, PDE-constrained optimization or optimal
control problems. Projection-based model order reduction (MOR) has become an established technique and
was already successfully applied to many problem classes. In essence, one tries to construct a
low-dimensional \textit{reduced space} approximating solutions for all possible parameter values.
For a given parameter the computation of a solution in this reduced space is then very efficient
with a provable upper bound on the error compared to a high-dimensional solution, see \eg\cite{BennerOhlbergerCohenWillcox} for a general introduction.

In this paper we will present a model order reduction approach for the
parametrized stationary advection-reaction equation
with a given (parameter independent) transport field $\vec{b}$
\begin{equation}\label{eq:introduction:strongTransportEquation}
  \begin{cases}
    \nabla\cdot(\vec{b}u_\mu) + c_\mu u_\mu &= f_\mu \qquad\text{in}\;\Omega, \\
    \hfill u_\mu &= g_\mu \qquad \text{on}\;\Gamma_{\text{inflow}},
  \end{cases}
\end{equation}
although, the presented concepts can be applied to many other equation types, \eg to the class
of Friedrichs'-systems~\cite{friedrichs1958,romor2023friedrichs}. It is well-known that
classic projection-based reduction methods can fail to perform well if the transport field $\vec{b}$
was dependent on the parametrization. This is due to a slow decrease of the Kolmogorov $N$-width,
\ie the approximation error by the best possible $N$-dimensional linear subspace.
Slowly decreasing lower bounds on the $N$-width are rigorously known for
linear transport~\cite{OhlbergerRave,arbes2023kolmogorov}
and the parameterized wave-equation~\cite{greif2019kolmogorov}
and have been numerically observed for many nonlinear equations.
For these problems, nonlinear reduction approaches for example using convolutional
autoencoders~\cite{hesthaven2018nonintrusive,LeeCarlberg2020,kim2020nonlinear,romor2023nonlinear},
shifted proper orthogonal decomposition~\cite{reiss2018spod, burela2023spod},
implicit feature-tracking~\cite{mirhoseini2023model},
registration-based approaches~\cite{ferrero2022registration}
and many more have been developed in the last years.

In this contribution we focus on applications where the transport field is
given by a physical law and thus not necessarily part of the parametrization
($\vec{b}\neq\vec{b}_\mu$). One such problem that we
want to consider is the reactive transport of a substance in a catalytic filter. Here, the velocity
field can be obtained from the theory of porous media \eg as the solution to the Darcy-equation
(see \eg\cite{philip1970flow}).
It is then of interest to compute solutions with parametrized reaction coefficients, inflow
data, etc. In this case, we would expect a better approximability of the set of solutions although, at
least to our knowledge, theory is still scarce.
We will show that for linear advection with parametrized reaction the Kolmogorov $N$-width decays very
fast, in fact, even exponentially.
Such results are known in general for parameter-separable, linear and coercive
problems~\cite{OhlbergerRave} or inf-sup-stable problems~\cite{urban2023reduced}.
\par
In comparison to elliptic or parabolic problems, the discretization of hyperbolic PDEs is less
straightforward. Most of the existing methods can be written in the variational form
\begin{equation}\label{eq:introduction:generalProblem}
  \text{Find}\; u^\delta\in \xcal^\delta:\quad a^\delta(u^\delta,v^\delta) = f(v^\delta) \qquad \forall v^\delta\in \ycal^\delta.
\end{equation}
where the infinite-dimensional trial/test spaces $\xcal,\ycal$ have been discretized by
finite-dimensional spaces $\xcal^\delta,\ycal^\delta$.
These methods can then be divided further into Galerkin-methods ($\xcal^\delta = \ycal^\delta$)
and Petrov-Galerkin-methods ($\xcal^\delta \neq \ycal^\delta$).
An example of the former utilizing a continuous trial/test space is the
streamline-diffusion method (SDFEM) that introduces a stabilizing diffusive term in the direction of
transport~\cite{hughes1979sdfem} or Galerkin least-squares methods~\cite{hughes1989gls}.
Another broad class of methods employs discontinuous function spaces
which generally requires the use of a numerical flux on the cell interfaces. For linear advection, the
Riemann-problem for the flux can be solved explicitly resulting in the so-called upwind-scheme where the
flux on the interface is entirely given by the information from the upstream cell.

In the Petrov-Galerkin setting the discrete inf-sup-condition
\begin{equation}\label{intro:infSupCondition}
  \Inf_{u^\delta\in \xcal^\delta}\sup_{v^\delta\in \ycal^\delta}\frac{|a^\delta(u^\delta,v^\delta)|}{\norm{u^\delta}_{\xcal}\norm{v^\delta}_{\ycal}} =: \gamma^\delta > 0
\end{equation}
gives a criterion when a combination of trial and test spaces is stable~\cite{babuska1971error}.
In most cases one will for a
given trial space $\xcal^\delta$ augment the test space to ensure that~\eqref{intro:infSupCondition} is
satisfied. As an example, the additional diffusive contribution of the SDFEM can also be introduced by
altering the test space which is then known as the Streamline Upwind Petrov-Galerkin (SUPG)
method~\cite{brooks1982supg}. In the best case one would like to include for every
basis function $u^\delta\in \xcal^\delta$ the corresponding supremizer $s(u^\delta)\in \ycal$ solving
\begin{equation*}
  (s(u^\delta), v^\delta)_{\ycal} \;=\; a^\delta(u^\delta,v^\delta)\quad\forall v^\delta\in \ycal^\delta,
  \quad\text{i.e.}\quad s(u^\delta) = R_{\ycal}^{-1}(a^\delta(u^\delta,\cdot))
\end{equation*}
where $R_{\ycal}: \ycal\rightarrow \ycal'$ denotes the Riesz-map.
This is equivalent to $s(u^\delta)$ realizing the supremum in~\eqref{intro:infSupCondition}.
A discretization with $\xcal^\delta\subseteq \xcal$ and $s(\xcal^\delta) =: \ycal^\delta\subseteq \ycal$ is then \textit{optimally stable}
\ie yields the optimal inf-sup-constant $\gamma^\delta=1$. Explicitly computing
the supremizers is, however, often computationally as expensive as solving the full problem and thus
hardly practicable. An exception is given by the Discontinuous Petrov Galerkin (DPG)
method~\cite{demkowiczDPG1,demkowiczDPG2,broersen2018transportDPG}.
By choosing a localizable norm on $\ycal$, the inversion of the Riesz-map amounts to local
block-inversions such that the computation of supremizers is actually feasible.
Another approach is presented in~\cite{DahmenPleskenWelper} where the optimal test space $s(\xcal^\delta)$
is approximated by so-called $\delta$-proximal
spaces yielding quasi-optimal stability. This in particular allows for a continuous discretization of
the trial space. \footnote{We will subsequently refer to this approach as \textit{Continuous}
Petrov-Galerkin method or CPG for short.}

Applying model order reduction techniques to Petrov-Galerkin schemes is a challenging task, since
one needs to generate two different reduced spaces. In particular, the stability of the high-dimensional
model is not automatically inherited by the reduced problem and supremizer enrichment requires
parameter-dependent, high-dimensional computations.
Therefore, reduced test and trial spaces need to be generated
simultaneously, continuously ensuring their stability (which has for example been done in the
double-greedy method~\cite{DahmenHuangSchwab,DahmenPleskenWelper}).

Recently, an adjoint method to the DPG, called DPG*, has been proposed in~\cite{demkowiczDPGstar}.
Likewise, there also exists an adjoint method to the CPG which has in the optimal-stability context
first been proposed in~\cite{BrunkenSmetanaUrban} for linear transport and was since then also applied
to \eg the wave equation~\cite{HenningPalitta} and the Schroedinger equation~\cite{hain2022ultraweak}.
Unknown to the authors, this method already existed for many years in the least-squares community,
known as the $\LLstar$-method~\cite{caiFOSLL}. While exhibiting only suboptimal approximation qualities,
it allows for a reformulation of the original problem in terms of a symmetric and coercive dual problem
defined on the test space which can be thought of as the normal equations for the underlying
minimization problem. This is particularly convenient for model order reduction approaches as one can
then use classic techniques for symmetric problems.
Once solved, the dual solution is then used to reconstruct the primal solution.
In practice, one can often even abstain from an explicit reconstruction as most evaluations of the
solution can be reformulated in terms of functional evaluations of the dual solutions
(see~\cite{renelt2023} for details).
\par
The paper is now structured as follows: We first derive an ultraweak, optimally conditioned variational
formulation for Problem~\eqref{eq:introduction:strongTransportEquation} in Section~\ref{sec:transport}.
As the discretization and numerical solving of this particular problem have already been
discussed in~\cite{renelt2023}, we subsequently focus on the parametrized variant.
In Section~\ref{sec:approximability} we prove the exponential convergence of the
Kolmogorov $N$-width decay for the proposed parametrization which is based on the proof
for coercive problems~\cite{OhlbergerRave}. The prerequisites of our theorem allow for an abstract
investigation whether other parametrized inf-sup-stable problems are similarly suited for reduction.
Section~\ref{sec:modelOrderReduction} then continues with details on the generation of reduced spaces
for the dual variable. If such a reduced space has been obtained, we can employ an offline-online
splitting to minimize the computational costs for the reduced solving process. The selection of the
reduced basis functions is done by a weak Greedy algorithm utilizing a residual based error estimator.
Finally, we fix concrete test cases and present numerical results in Section~\ref{sec:numerics}. We
first verify the convergence of the full-order method and subsequently show the exponential
convergence of the approximation error of the reduced spaces produced by the greedy algorithm.
A comparison of the online runtimes shows the speedup gained by employing the reduced model.

\section{Ultraweak Petrov-Galerkin formulation of reactive transport}\label{sec:transport}
In this section we will introduce an ultraweak formulation of a reactive transport problem.
This results in the variational formulation proposed in~\cite{renelt2023} which we formally
derive and supplement by rigorous proofs.
The theory and notation are mostly based on~\cite{DahmenHuangSchwab}, which, in turn, employs
the concept of optimal test spaces - proposed in~\cite{demkowiczDPG1} - in a continuous setting.
\par
Let us first formally specify the strong formulation of the
parameter-independent reactive transport problem
\begin{equation}\label{eq:transport:strongProblem}
\text{Find $u$ such that} \qquad
\begin{cases}
\nabla\cdot(\vec{b} u) + cu &= f_\circ \phantom{g_D}\quad\text{in}\;\Omega,\\
\hfill u &= g_D \phantom{f_\circ}\quad\text{on}\;\Gin. \\
\end{cases}
\end{equation}
defined on an open and bounded domain $\Omega\subset\R^d$ with Lipschitz-boundary
$\Gamma := \partial\Omega$. The velocity field  $\vec{b}\in C^1(\Omega,\R^d)$ is assumed to be
divergence-free\footnote{We restrict ourselves to divergence free velocities. However,
non-divergence-free fields may be considered as well.} and $c\in \Linfty$, $c\geq 0$ a.e.
defines the reaction coefficient.
Moreover, $f_\circ\in\Ltwo$ denotes a source term and $g_D\in H^{1/2}(\Gin)$ the
boundary values at the inflow boundary, where the  in- and outflow boundary parts are defined as
$\Gamma_{out/in} := \{ z\in\Gamma \;|\; \vec{b}(z)\cdot \vec{n}(z) \gtrless 0\}$
and $\vec{n}(\cdot)$ denotes the outer unit normal. The induced variational form
\begin{equation}\label{eq:transport:strongVariationalForm}
  \text{Find}\; u\in \Xdense: \quad {(\nabla\cdot(\vec{b}u) + cu, v)_{\Ltwo}}
  = (f_\circ,v)_{\Ltwo} \qquad \forall v\in \Ydense.
\end{equation}
defined on the still highly regular vectorspaces
\begin{equation*}
\Xdense := \{u\in C^\infty(\Omega) \;|\; u=g_D \;\text{on}\;\Gin\}, \qquad \Ydense := C^\infty(\Omega),
\end{equation*}
now serves as the starting point of our derivation. Applying integration by parts we can write the
formal adjoint operator $A_\circ^*$ in the form
\begin{equation}\label{def:transport:adjointOperator}
  A_\circ^*[v](u) = (u, -\vec{b}\nabla v + cv)_{\Ltwo} + (u,v)_{\LtraceOut}.
\end{equation}
using the weighted inner product
${(u,v)}_{L^2(\Gamma_{out/in}, |\vec{b}\vec{n}|)} :=
\int_{\Gamma_{out/in}} u\,v\,|\vec{b}\vec{n}| \diff s$.
This also yields an additional linear term $(-g_D,v)_{\LtraceIn}$ on the right hand side to account for
the Dirichlet constraints.

We now need to equip $\Xdense$ and $\Ydense$ with suitable norms:
In the end we want to use a combination known as \textit{energy norm pairing} where the norm on
$\Xdense$ is the energy norm induced by the norm on $\Ydense$.
Instead of defining a norm on $\Ydense$ first, one can also define a norm on $\Xdense$
and then reconstruct the inducing norm on $\Ydense$, see~\cite{ChanHeuer}
for an in-depth discussion of this relation.

\subsection{Derivation and characterization of the trial space
  \texorpdfstring{$\xcal$}{X}}
In general, many choices of $\norm{\cdot}_{\Xdense}$ are possible leading to different variational formulations.
The only constraints we require are the following:
\begin{enumerate}[label={(N\arabic*)}, align=left, leftmargin=*]
\item For every $v\in \Ydense$, the operator $A_\circ^*[v]$ is continuous,
  \ie $A_\circ^* \in \mathcal{L}(\Ydense, \Xdense')$. \label{ass:continuity}
\item The norm $\norm{\cdot}_{\Xdense}$ is induced by a scalar product $(\cdot,\cdot)_{\Xdense}$.
\end{enumerate}
For the adjoint operator $A_\circ^*$ defined in~\eqref{def:transport:adjointOperator}
these assumptions are fulfilled if we choose
\begin{equation}\label{def:transport:xnorm}
  \norm{u}_{\Xdense}^2 := \Lnorm{u}^2 + \norm{u\restr{\Gout}}_{\LtraceOut}^2
\end{equation}
as the norm on $\Xdense$. Via closure we obtain the ultraweak trial space
\begin{equation*}
  \xcal := \clos_{\norm{\cdot}_{\Xdense}}(\Xdense)
\end{equation*}
equipped with the continuous extension $\norm{\cdot}_\xcal$
of the norm $\norm{\cdot}_{\Xdense}$.

\begin{remark}
  We want to emphasize again that~\eqref{def:transport:xnorm} is just one of many possible choices for
  $\norm{\cdot}_{\Xdense}$. One could for example also consider a mesh-dependent norm involving additional
  contributions on the skeleton which ultimately leads to a DPG-like formulation. Using a broken
  Sobolev-norm and thus enforcing more regularity in the trial space one can obtain a DPG*-like setting.
  Both of these formulations are, however, only efficient when combined with quasi-optimal norms
  which is not as suitable for reduction approaches.
\end{remark}

\begin{remark}
  Compared to the CPG~\cite{DahmenHuangSchwab}, we have dropped the (implicit) assumption
  that the adjoint differential operator $A_\circ^*$ can be seen as a mapping into $\Ltwo'$.
  By prescribing a slightly stronger norm on the trial
  functions, we have enlarged the dual space $\xcal'$ yielding a weaker continuity constraint
  on the adjoint operator. This then allows us to prescribe the boundary constraints on the test
  functions in a weak sense. As discussed in~\cite{BrunkenSmetanaUrban}, strong enforcement of the
  boundary conditions can lead to nonphysical artifacts and should thus be avoided.
\end{remark}

As it is known, the space of $C^\infty$-functions forms a dense subset of the Sobolev-spaces
$H^k(\Omega),\,k\geq 0$. Thus, the choice of a Sobolev-norm $\norm{u}_{\Xdense} := \norm{u}_{H^k(\Omega)}$
automatically yields $H^k(\Omega)$ as a trial space. However, as soon as either $\Xdense$ or the norm $\norm{\cdot}_{\Xdense}$ include constraints/contributions from the skeleton or boundary, one needs to pay attention
which space $\xcal$ one has actually constructed. We will thus shortly give a formal classification of
the space $\xcal$:

\begin{corollary}
  There exists a linear and continuous trace operator\footnote{We will abbreviate different trace
  operators by the same symbol $\gamma$ and only use subscripts if necessary.}
\begin{equation*}
\gamma_{\xcal}: \xcal \rightarrow \LtraceOut
\end{equation*}
as well as a linear and continuous projection operator
\begin{equation}
  \operatorname{pr}_{L^2}: \xcal \rightarrow \Ltwo
\end{equation}
fulfilling $\gamma_{\xcal}(u) = u\restr{\Gout}$ and $\operatorname{pr}_{L^2}(u) = u$ for all
$u\in \Xdense \subset \xcal$.
\end{corollary}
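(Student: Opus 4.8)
The plan is to identify $\operatorname{pr}_{L^2}$ and $\gamma_{\xcal}$ as the unique continuous extensions to $\xcal$ of two manifestly bounded operators already defined on the dense subspace $\Xdense$, so that the whole statement reduces to an application of the standard extension theorem for bounded linear maps from a dense subspace of a normed space into a Banach space.

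First I would record the two elementary estimates that are literally built into the definition~\eqref{def:transport:xnorm}: for every $u\in\Xdense$ one has $\Lnorm{u}\le\norm{u}_{\Xdense}$ and $\norm{u\restr{\Gout}}_{\LtraceOut}\le\norm{u}_{\Xdense}$, each obtained by discarding the respective other nonnegative summand. Consequently the $L^2$-inclusion $\iota:\Xdense\to\Ltwo$, $\iota(u)=u$, and the outflow trace $\tau:\Xdense\to\LtraceOut$, $\tau(u)=u\restr{\Gout}$, are linear and bounded with respect to $\norm{\cdot}_{\Xdense}$, both with operator norm at most one.

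Since $\Xdense$ is by construction dense in $\xcal$ and the target spaces $\Ltwo$ and $\LtraceOut$ are complete, $\iota$ and $\tau$ extend uniquely to continuous linear maps $\operatorname{pr}_{L^2}:\xcal\to\Ltwo$ and $\gamma_{\xcal}:\xcal\to\LtraceOut$. Concretely, for $u\in\xcal$ and a $\norm{\cdot}_{\Xdense}$-Cauchy sequence $(u_n)\subset\Xdense$ with $u_n\to u$, the two estimates above make $(u_n)$ a Cauchy sequence in $\Ltwo$ and $(u_n\restr{\Gout})$ a Cauchy sequence in $\LtraceOut$, so one may set $\operatorname{pr}_{L^2}(u):=\lim_n u_n$ and $\gamma_{\xcal}(u):=\lim_n u_n\restr{\Gout}$, the limits taken in the respective spaces; independence of the chosen representative follows from the same estimates, and the operator norm bound of one as well as the identities $\operatorname{pr}_{L^2}(u)=u$, $\gamma_{\xcal}(u)=u\restr{\Gout}$ for $u\in\Xdense$ are then immediate.

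I do not expect a genuine obstacle: the corollary is a direct consequence of how the norm~\eqref{def:transport:xnorm} and the space $\xcal$ were introduced. The single point worth flagging is that for a general $u\in\xcal$ the object $\gamma_{\xcal}(u)$ is no longer a pointwise restriction but precisely the $\LtraceOut$-limit above, which is why the equality $\gamma_{\xcal}(u)=u\restr{\Gout}$ is asserted only for $u\in\Xdense$, exactly as in the statement.
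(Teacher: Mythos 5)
Your argument is correct and is exactly the paper's proof, merely spelled out in more detail: both rest on the observation that the two summands of the norm \eqref{def:transport:xnorm} make the inclusion and the outflow trace bounded on the dense subspace $\Xdense$, whence they extend uniquely to $\xcal$ by the standard extension theorem. The paper states this in one sentence; your elaboration of the Cauchy-sequence construction and the caveat about $\gamma_{\xcal}(u)$ no longer being a pointwise restriction for general $u\in\xcal$ are both accurate.
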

\begin{proof}
  Due to the choice of the trial space norm~\eqref{def:transport:xnorm} the operators are obviously
  continuous on $\Xdense$ and can therefore be continuously extended to all elements in $\xcal$.
\end{proof}

We will now show that these two operators fully characterize $\xcal$, \ie they are both surjective and
in combination also injective. Furthermore, the $\xcal$-norm can be computed from the norms of
$\operatorname{pr}_{L^2}(u)$ and $\gamma(u)$. First, we need to show that the values of these two norms
are independent of each other:
\begin{lemma}\label{lemma:diracSequence}
  For every $\hat{u}\in\LtraceOut$ there exists a sequence $(\varphi_i)_{i\in\N}\subset \Xdense$ such that
  \begin{equation*}
    \lim_{i\to\infty}\Lnorm{\varphi_i} = 0,
    \qquad \lim_{i\to\infty}\norm{\hat{u} - \varphi_i\restr{\Gout}}_{\LtraceOut} = 0
  \end{equation*}
\end{lemma}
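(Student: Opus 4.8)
The plan is to reduce the statement to a ``one-sided Dirac sequence'' concentrating near the boundary. It suffices to produce, for each $\varepsilon>0$, a single $\varphi\in\Xdense$ with $\Lnorm{\varphi}<\varepsilon$ and $\norm{\hat u-\varphi\restr{\Gout}}_{\LtraceOut}<\varepsilon$, since one may then take $\varphi_i:=\varphi$ for $\varepsilon=1/i$. I would first dispose of the inhomogeneous inflow datum: fix some $\varphi_0\in\Xdense$ that lies in $\Ltwo$ (such elements exist, e.g.\ a harmonic extension of the boundary data, which is automatically smooth in $\Omega$), write $\Xdense=\varphi_0+V$ for the linear space $V:=\{v\in C^\infty(\Omega):v\restr{\Gin}=0\}$, and recall that $C_c^\infty(\Omega)\subset V$ is dense in $\Ltwo$ while having vanishing trace on $\Gout$. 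Choosing $w\in C_c^\infty(\Omega)$ with $\Lnorm{\varphi_0+w}<\varepsilon/2$ and writing the candidate as $\varphi=\varphi_0+w+v$ (which lies in $\Xdense$ since $w,v\in V$) then reduces the lemma to the \emph{homogeneous} claim: for every $\hat u\in\LtraceOut$ there is $v\in V$ with $\Lnorm{v}$ and $\norm{\hat u-v\restr{\Gout}}_{\LtraceOut}$ both as small as desired.

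For the homogeneous claim, assume for the moment that we have a bounded $H\in C^\infty(\overline{\Omega})$ vanishing in a neighbourhood of $\overline{\Gin}$ with $\norm{\hat u-H\restr{\Gout}}_{\LtraceOut}<\varepsilon$. Take cut-offs $\chi_t\in C^\infty(\overline{\Omega})$, $0\le\chi_t\le1$, with $\chi_t\equiv1$ on a neighbourhood of $\Gamma$ and $\operatorname{supp}\chi_t\subset\{x\in\overline{\Omega}:\operatorname{dist}(x,\Gamma)<t\}$, and set $v_t:=\chi_t H$. Then $v_t\in C^\infty(\overline{\Omega})$ with $v_t\restr{\Gin}=H\restr{\Gin}=0$, so $v_t\in V$; the trace on $\Gout$ is unaffected, $v_t\restr{\Gout}=H\restr{\Gout}$, so the trace error remains below $\varepsilon$; and, using the standard tubular-volume estimate $\lvert\{x\in\Omega:\operatorname{dist}(x,\Gamma)<t\}\rvert\le Ct$ for bounded Lipschitz domains, $\Lnorm{v_t}^2\le\norm{H}_{L^\infty(\Omega)}^2\,\lvert\{x\in\Omega:\operatorname{dist}(x,\Gamma)<t\}\rvert\le C\,\norm{H}_{L^\infty(\Omega)}^2\,t\to 0$ as $t\to 0$. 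Sending $t\to 0$ and then $\varepsilon\to 0$ produces the required sequence.

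The one genuinely nontrivial step — and the part I expect to be the main obstacle — is the construction of $H$. The difficulty is that $\overline{\Gin}$ and $\overline{\Gout}$ may meet along the characteristic set $\Gamma_0:=\{z\in\Gamma:\vec{b}(z)\cdot\vec{n}(z)=0\}$, where a continuous function cannot be simultaneously forced to vanish and to approximate an arbitrary $\hat u$. This is resolved by exploiting that the weight $\lvert\vec{b}\cdot\vec{n}\rvert$ degenerates precisely on $\Gamma_0$: by absolute continuity of the integral, $\int_{\{z\in\Gout:\,\lvert\vec{b}\cdot\vec{n}\rvert<\eta\}}\lvert\hat u\rvert^2\,\lvert\vec{b}\cdot\vec{n}\rvert\diff s<\varepsilon^2$ once $\eta$ is small, so it is enough to approximate $\hat u$ on the genuinely outflow set $\{z\in\Gamma:\vec{b}\cdot\vec{n}\ge\eta\}$, which is compact, lies at positive distance from $\overline{\Gin}$ and $\Gamma_0$, and carries a weight comparable to $1$. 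After truncating $\hat u$ to a bounded function, one constructs $H$ by mollifying in local boundary charts and multiplying by a spatial cut-off supported in a small neighbourhood of $\{\vec{b}\cdot\vec{n}\ge\eta\}$ that stays away from $\overline{\Gin}$: this keeps the $L^\infty$-bound, and the degeneracy of the weight near $\Gamma_0$ absorbs the uncontrolled values of $H$ on $\Gout\cap\{\lvert\vec{b}\cdot\vec{n}\rvert<\eta\}$. Under the mere Lipschitz assumption on $\partial\Omega$ (where $\vec{n}\in L^\infty$ and $\Gin,\Gout,\Gamma_0$ are only measurable) one first passes, via Lusin's or Egorov's theorem, to a compact subset of $\{\vec{b}\cdot\vec{n}\ge\eta\}$ on which $\vec{n}$ is continuous and $\hat u$ is bounded, at the price of another arbitrarily small error, after which the argument goes through unchanged.
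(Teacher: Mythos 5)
Your proof takes a genuinely different route from the paper's. The paper extends $\hat u$ by $g_D$ on $\Gin$, regularizes it to an element of $H^{1/2}(\partial\Omega)$, lifts it to $H^1(\Omega)$ via surjectivity of the trace operator, approximates the lift by elements of $\Xdense$ in $H^1$, and then kills the interior $L^2$-norm by subtracting a $C_0^\infty(\Omega)$ approximation of the same lift in $\Ltwo$ (which leaves the trace untouched). You instead build a boundary-concentrated approximant directly: a smooth $H$ with approximately the right trace, multiplied by a cutoff supported in a collar $\{x : \operatorname{dist}(x,\Gamma)<t\}$ whose volume $\mathcal{O}(t)$ annihilates the interior norm. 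Both arguments exploit the same decoupling of trace and interior norm; yours is arguably more elementary in that it avoids the $H^1\!\to\!H^{1/2}$ trace theory altogether (which the paper invokes and must still combine with an unproven density of $H^{1/2}(\partial\Omega)$-traces in the weighted space and of $\Xdense$ in the affine $H^1$-subspace). The price is that all the hard analysis migrates into the construction of $H$, where you correctly identify the characteristic interface as the obstruction and correctly use the degeneracy of the weight $|\vec{b}\cdot\vec{n}|$ to absorb it.

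One soft spot remains in that construction. Your separation claim --- that $\{z\in\Gamma : \vec{b}(z)\cdot\vec{n}(z)\ge\eta\}$ has positive distance from $\overline{\Gin}$ --- relies on continuity of $\vec{b}\cdot\vec{n}$ along $\Gamma$, which holds for (piecewise) $C^1$ boundaries but not for a general Lipschitz boundary with merely measurable normal. The Lusin/Egorov patch restores continuity of $\vec{n}$ on a compact subset $K$ of $\{\vec{b}\cdot\vec{n}\ge\eta\}$, but that is not the property you need: points of $K$ may still be accumulation points of $\Gin$, and a continuous $H$ vanishing on $\Gin$ must then vanish on $K\cap\overline{\Gin}$, where the weight is bounded below by $\eta$ and the error is not absorbed. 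So the final paragraph does not, as written, close the fully general Lipschitz case. This limitation is shared, implicitly, by the paper's own ``follows by density'' step, and in the settings the paper actually works in ($\vec{b}$ continuous up to $\overline{\Omega}$, piecewise smooth boundary, so that $\overline{\Gin}\cap\overline{\Gout}$ lies in the characteristic set) your argument goes through as written.
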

\begin{proof}
  \newcommand{\HtraceOut}{H^{1/2}(\Gout)}
  We first extend $\hat{u}$ to the full boundary by setting
  $\hat{u}\restr{\Gin} := g_D$ and $\hat{u} := 0$ else.
  We then prove the statement assuming higher regularity $\hat{u}\in H^{1/2}(\partial\Omega)$;
  the general case follows by density. As the trace operator
  $\gamma: H^1(\Omega) \rightarrow H^{1/2}(\partial\Omega)$ is surjective
  and thus has a bounded right-inverse,
  we can define the element $u := \gamma^{-1}(\hat{u})\in H^1(\Omega)$.
  By density we now find sequences
  \begin{align*}
    (\psi^1_i)_{i\in\N}\subseteq \Xdense, &\quad \psi^1_i\xrightarrow{H^1(\Omega)} u, \\
    (\psi^2_j)_{j\in\N}\subseteq C_0^\infty(\Omega), &\quad \psi^2_j\xrightarrow{\Ltwo} u.
  \end{align*}
  Applying the trace operator $\gamma$ to the first limit and using its continuity one shows
  \begin{equation*}
    \gamma(\psi^1_i) \xrightarrow{H^{1/2}(\Gout)} \gamma(u) = \hat{u},
  \end{equation*}
  \ie the traces of $\psi^1_i$ converge to $\hat{u}$. Defining the sequence
  $\varphi_i := \psi^1_i - \psi^2_i \in \Xdense$, $i\in\N$, we observe
  \begin{align*}
    \Lnorm{\varphi_i}
    &\;\leq\; \hfill \Lnorm{\psi^1_i - u} + \Lnorm{u - \psi^2_i}
    \xrightarrow{i\to\infty} 0 \\
    \norm{\hat{u} - \varphi_i\restr{\Gout}}_{\HtraceOut}
    &\;=\; \hfill \norm{\hat{u} - \psi^1_i\restr{\Gout}}_{\HtraceOut}
    \xrightarrow{i\to\infty} 0.
  \end{align*}
\ie $(\varphi_i)_{i\in\N}$ has the desired properties.
\end{proof}

\newcommand{\range}{\operatorname{Im}}
\begin{proposition}[Characterization of $\xcal$]\label{thm:trialIsometry}
  The trial space $\xcal$ is isometrically isomorphic to the Sobolev-space $\xcal_{L^2}$, defined as
  \begin{equation*}
  \xcal_{L^2} := \Ltwo \times \LtraceOut.
  \end{equation*}
  equipped with the canonical product norm
  \begin{equation*}
    \norm{(u, \hat{u})}_{\xcal_{L^2}}^2 := \Lnorm{u}^2 + \norm{\hat{u}}_{\LtraceOut}^2.
  \end{equation*}
\end{proposition}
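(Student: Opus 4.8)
The plan is to realise the isomorphism explicitly as the map
\[
  \Phi:\xcal\longrightarrow\xcal_{L^2},\qquad
  \Phi(u):=\bigl(\operatorname{pr}_{L^2}(u),\,\gamma_{\xcal}(u)\bigr),
\]
assembled from the two continuous operators furnished by the preceding corollary, and then to check in turn that $\Phi$ is (i) isometric, hence injective, and (ii) surjective. The real content of the proposition sits in (ii).

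For (i) I would argue by density. On the dense subspace $\Xdense\subset\xcal$ we have $\operatorname{pr}_{L^2}(u)=u$ and $\gamma_{\xcal}(u)=u\restr{\Gout}$, so by the very definition \eqref{def:transport:xnorm} of the trial norm,
\[
  \norm{\Phi(u)}_{\xcal_{L^2}}^{2}
  =\Lnorm{u}^{2}+\norm{u\restr{\Gout}}_{\LtraceOut}^{2}
  =\norm{u}_{\Xdense}^{2}
  =\norm{u}_{\xcal}^{2}\qquad\forall u\in\Xdense .
\]
Both sides are continuous functions of $u\in\xcal$ — the left one because $\operatorname{pr}_{L^2}$ and $\gamma_{\xcal}$ are continuous, the right one because $\norm{\cdot}_{\xcal}$ is by construction the continuous extension of $\norm{\cdot}_{\Xdense}$ — so the identity propagates to all of $\xcal$. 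Thus $\Phi$ is an isometry, and injectivity follows at once, since $\Phi(u)=0$ forces $\norm{u}_{\xcal}=0$.

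The substantive step is surjectivity, i.e.\ that the $\Ltwo$-component and the outflow trace of an element of $\xcal$ may be prescribed \emph{independently}. Since $\xcal$ is complete and $\Phi$ is isometric, $\Phi(\xcal)$ is a closed subspace of $\xcal_{L^2}$, so it suffices to prove that it is dense. Given a target $(w,\hat u)\in\Ltwo\times\LtraceOut$ and $\varepsilon>0$, I would first invoke Lemma~\ref{lemma:diracSequence} to choose $\varphi\in\Xdense$ with $\Lnorm{\varphi}<\varepsilon$ and $\norm{\hat u-\varphi\restr{\Gout}}_{\LtraceOut}<\varepsilon$ — this concentration near $\Gout$ with vanishing $L^2$-mass is exactly what decouples the two components — and then, by density of $C_0^\infty(\Omega)$ in $\Ltwo$, choose $\phi\in C_0^\infty(\Omega)$ with $\Lnorm{w-\phi}<\varepsilon$. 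The element $u:=\varphi+\phi$ still lies in $\Xdense$, since $\phi$ vanishes near $\partial\Omega$ and hence $u=g_D$ on $\Gin$, and it satisfies $u\restr{\Gout}=\varphi\restr{\Gout}$; a triangle inequality then gives $\norm{\Phi(u)-(w,\hat u)}_{\xcal_{L^2}}\le\sqrt{5}\,\varepsilon$. Letting $\varepsilon\to0$ yields $\Phi(\xcal)=\xcal_{L^2}$, completing the proof. The step I expect to be the main obstacle is precisely pinning down this independence of the two components, which is why Lemma~\ref{lemma:diracSequence} is isolated beforehand.

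A caveat worth flagging is that $\Xdense$ is an \emph{affine} subspace whenever $g_D\neq0$; this is why the approximants above are produced by perturbing elements of $\Xdense$ only by compactly supported functions. It also means one should note beforehand — again via Lemma~\ref{lemma:diracSequence}, this time with $\hat u=0$, combined with a cut-off towards $\Gin$ — that the inflow constraint disappears in the completion, so that $\xcal$ is genuinely a Hilbert space and speaking of a linear isometric isomorphism is meaningful.
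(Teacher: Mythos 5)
Your argument is correct and follows essentially the same route as the paper: the same isometry $u\mapsto(\operatorname{pr}_{L^2}(u),\gamma(u))$, with surjectivity reduced via Lemma~\ref{lemma:diracSequence} to decoupling the $\Ltwo$-component from the outflow trace. Your only deviations are cosmetic but slightly more careful — you perturb by $C_0^\infty(\Omega)$ functions rather than summing two elements of $\Xdense$ (which, as you note, would violate the affine inflow constraint when $g_D\neq0$, a point the paper's proof glosses over) — so the substance of the two proofs is the same.
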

\begin{proof}
  Define the linear isomorphism
  \begin{equation*}
    \tilde{\Phi}: \Xdense \xrightarrow{1:1} \range\tilde{\Phi} \quad(\subseteq\xcal_{L^2}), \qquad \tilde{\Phi}(u) := (\mathrm{pr}_{L^2}(u), \gamma(u)).
  \end{equation*}
  which also is an isometry if we equip $\range\Phi$ with the norm $\norm{\cdot}_{\xcal_{L^2}}$.
  We can now continuously extend $\tilde{\Phi}$ to an isometry between the closures \ie
  \begin{equation*}
    \Phi: \xcal \xrightarrow{1:1} \clos_{\norm{\cdot}_{\xcal_{L^2}}}(\range\tilde{\Phi}),
    \qquad \Phi(\lim_{k\to\infty} u_k) := \lim_{k\to\infty} \Phi(u_k).
  \end{equation*}
  It remains to show that indeed
  \begin{equation*}
    \clos_{\norm{\cdot}_{\xcal_{L^2}}}(\range\tilde{\Phi}) \;=\; \xcal_{L^2}.
  \end{equation*}
  For arbitrary $(u,\hat{u})\in \xcal_{L^2}$ there exist sequences $(\varphi_k)_{k\in\N}, (\hat{\varphi}_l)_{l\in\N} \subset \Xdense$ such that
  \begin{align*}
    &\tilde{\Phi}(\varphi_k) \xrightarrow{\xcal_{L^2}} (u,0),
    & \hfill \text{($\{u\in \Xdense \,|\, u\restr{\Gout}=0\}$ is dense in $\Ltwo$)}\\
    &\tilde{\Phi}(\hat{\varphi}_l) \xrightarrow{\xcal_{L^2}} (0,\hat{u}),
    & \hfill \text{(Lemma~\ref{lemma:diracSequence})}
  \end{align*}
  and it follows that
  \begin{equation}
    \tilde{\Phi}(\varphi_k + \hat{\varphi}_k) \;\xrightarrow{\xcal_{L^2}}\; (u,\hat{u}).
    \tag*{\hspace*{1mm}}
  \end{equation}
\end{proof}

\begin{corollary}
  It holds that
\begin{align*}
  \norm{x}_\xcal^2 &=\quad \Lnorm{\mathrm{pr}_{L^2}(x)}^2
  + \norm{\gamma(x)}_{\LtraceOut}^2 = {(x,x)}_\xcal \\
  \text{with } \  (x,x')_\xcal &:=\quad {(\mathrm{pr}_{L^2}(x), \mathrm{pr}_{L^2}(x'))}_{\Ltwo}
  + {(\gamma(x), \gamma(x'))}_{\LtraceOut}.
\end{align*}
In particular, $\xcal$ is a Hilbert space and there exists the Riesz-map
$R_{\xcal}: \xcal \rightarrow \xcal'$.
\end{corollary}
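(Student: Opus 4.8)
The statement is an immediate consequence of the isometric isomorphism $\Phi:\xcal\to\xcal_{L^2}$ constructed in Proposition~\ref{thm:trialIsometry}, so the plan is essentially one of bookkeeping. First I would note that $\Phi$ agrees with $(\mathrm{pr}_{L^2}(\cdot),\gamma(\cdot))$ not only on the dense subspace $\Xdense$ but on all of $\xcal$: the operators $\mathrm{pr}_{L^2}$ and $\gamma_{\xcal}$ are continuous on $\xcal$ (by the corollary establishing these two operators) and coincide with the two components of $\tilde{\Phi}$ on $\Xdense$, hence by uniqueness of continuous extensions $\Phi(x)=(\mathrm{pr}_{L^2}(x),\gamma(x))$ for every $x\in\xcal$. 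This is the only place where a density argument is needed, and it is exactly the one already used in the proof of Proposition~\ref{thm:trialIsometry}.

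Given this identification, the norm formula is just the definition of the product norm on $\xcal_{L^2}$ transported back through the isometry $\Phi$:
\[
  \norm{x}_\xcal^2 \;=\; \norm{\Phi(x)}_{\xcal_{L^2}}^2 \;=\; \Lnorm{\mathrm{pr}_{L^2}(x)}^2 + \norm{\gamma(x)}_{\LtraceOut}^2 .
\]
The bilinear form $(\cdot,\cdot)_\xcal$ written in the statement is precisely the pullback under $\Phi$ of the canonical inner product of the product Hilbert space $\xcal_{L^2}=\Ltwo\times\LtraceOut$; it is therefore symmetric, bilinear and positive definite, and the associated quadratic form equals $\norm{\cdot}_\xcal^2$ by the computation above. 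Alternatively, one may simply check the parallelogram identity for $\norm{\cdot}_\xcal$ directly from the product structure and then recover $(\cdot,\cdot)_\xcal$ by polarization.

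Finally, completeness of $\xcal$ is inherited through $\Phi$ from $\xcal_{L^2}$, which is complete as a finite product of the Hilbert spaces $\Ltwo$ and $\LtraceOut$ (equivalently, $\xcal$ is complete by construction, being obtained as a closure). Hence $\xcal$ is a Hilbert space, and the Riesz representation theorem yields the isometric isomorphism $R_{\xcal}:\xcal\to\xcal'$. I do not expect any genuine obstacle in this argument; the only step deserving a word of care is the extension of the identity $\Phi=(\mathrm{pr}_{L^2},\gamma)$ from $\Xdense$ to all of $\xcal$, which is routine.
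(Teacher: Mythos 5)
Your argument is correct and is exactly the reasoning the paper intends: the corollary is stated there without proof as an immediate consequence of Proposition~\ref{thm:trialIsometry}, and your bookkeeping (identifying $\Phi$ with $(\mathrm{pr}_{L^2},\gamma)$ on all of $\xcal$ by density, pulling back the product inner product, and inheriting completeness) fills in precisely the omitted routine steps.
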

In the following we will no longer distiguish between $\xcal$ and $\xcal_{L^2}$ and write
$(u, \hat{u})\in\xcal$ in the sense of  Prop.~\ref{thm:trialIsometry}.
Additionally, we continuously extend $A_\circ^*[v]$  to an operator acting on $\xcal$ and write
\begin{equation}\label{def:transport:adjointOperatorXcal}
  \forall v\in \Ydense:\quad A_\circ^*[v](u,\hat{u}) := {(u, -b\nabla v +cv)}_{\Ltwo}
  + {(\hat{u}, v\restr{\Gout})}_{\LtraceOut}.
\end{equation}

\subsection{The optimal test space
  \texorpdfstring{$\ycal$}{Y}}
We are now at a point similar to the setting of~\cite{DahmenHuangSchwab} replacing $\Ltwo$ with
the $L^2$-like space $\xcal$. Hence, we can proceed similarly if the following
assumptions hold:
\begin{enumerate}[label={(A\arabic*)}, align=left, leftmargin=*]
\item The adjoint operator $A_\circ^*: \Ydense \rightarrow \xcal'$ is injective.\label{ass:injectivity}
\item The range of the adjoint operator $\operatorname{rg}(A_\circ^*)$ is dense in $\xcal'$.
  \label{ass:surjectivity}
\end{enumerate}
The following proposition now gives a sufficient condition on the data functions for the
assumptions \ref{ass:injectivity} and \ref{ass:surjectivity}. This requires the notion of
an $\Omega$-filling transport field:
\begin{definition}[$\Omega$-filling flow]
  For a flow field $\vec{b}$ let $\xi(t,x)$ define the integral curves satisfying
  \begin{equation*}
    (\partial_t\xi)(t,x) = \vec{b}(\xi(t,x)), \quad \xi(0,x) = x.
  \end{equation*}
  A flow field $\vec{b}$ is called $\Omega$-filling if there exists a function
  $T\in\Linfty$ such that for almost all $x\in\Omega$ there exists
  a $x_\Gamma\in\Gin$ with $\xi(T(x),x_\Gamma) = x$. We call $T_{max}\;:=\norm{T}_{\Linfty}$ the
  \textit{maximal traverse time}. We additionally require that $T$ is defined
  on the outflow boundary (in the sense of traces)
  with $\norm{T}_{L^\infty(\Gout)} = \norm{T}_{\Linfty} = T_{max}$.
\end{definition}
\begin{definition}[Minimal traverse time]
  Let $\vec{b}$ be $\Omega$-filling. We define the minimal traverse time as
  \begin{equation}\label{def:traverseTime}
    T_{min}:=\operatorname{essinf}_{z\in\Gout}\{T(z)\}.
  \end{equation}
  Note, that we have $0 \leq T_{min} < \infty.$
\end{definition}
\begin{proposition}\label{prop:transport:poincare}
  Let one of the following conditions hold:
  \begin{enumerate}
  \item The transport field $\vec{b}$ is $\Omega$-filling.
  \item The reaction coefficient $c$ is bounded from below, \ie~$c(x) \geq \kappa > 0$.
  \end{enumerate}
  Then, the adjoint operator $A_\circ^*:\Ydense\rightarrow \xcal'$~\eqref{def:transport:adjointOperatorXcal}
  fulfills the assumptions \ref{ass:injectivity} and \ref{ass:surjectivity}.
  Moreover, the Poincaré-type inequality
  \begin{equation}\label{eq:transportPoincare}
    \norm{v}_\xcal \leq C_p\norm{A_\circ^*[v]}_{\xcal'}
  \end{equation}
  holds for every $v\in \Ydense$. The Poincaré constant $C_p$ can be bounded as follows:
  \begin{equation*}
    C_p \leq (2\,T_{max}+\max\{1-T_{min},0\})
  \end{equation*}
  or, if the second condition holds,
  \begin{equation*}
    C_p \leq \max\{2,\kappa^{-1}\}.
  \end{equation*}
\end{proposition}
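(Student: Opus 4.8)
The plan is to prove the Poincaré-type inequality~\eqref{eq:transportPoincare} directly by constructing, for each $v\in\Ydense$, a suitable test element $(u,\hat u)\in\xcal$ that makes the duality pairing $A_\circ^*[v](u,\hat u)$ large compared to $\|v\|_\xcal$; injectivity \ref{ass:injectivity} then follows immediately (if $A_\circ^*[v]=0$ then $\|v\|_\xcal=0$), and density of the range \ref{ass:surjectivity} follows from the closed range theorem together with injectivity of the (formal) primal operator, or equivalently by a standard functional-analytic argument: the estimate $\|v\|_\xcal\le C_p\|A_\circ^*[v]\|_{\xcal'}$ means $A_\circ^*$ is bounded below, hence has closed range, and since its annihilator in $\xcal$ is trivial (the primal transport problem with homogeneous data has only the zero solution, which one checks by the method of characteristics), the range is dense. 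So the whole proposition reduces to establishing~\eqref{eq:transportPoincare} with the stated constants.

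For the inequality itself I would work with the representation~\eqref{def:transport:adjointOperatorXcal}: $A_\circ^*[v](u,\hat u) = (u,-\vec b\nabla v + cv)_{\Ltwo} + (\hat u, v|_{\Gout})_{\LtraceOut}$, so that
\begin{equation*}
  \|A_\circ^*[v]\|_{\xcal'} = \sup_{(u,\hat u)\neq 0}\frac{|(u,-\vec b\nabla v + cv)_{\Ltwo} + (\hat u, v|_{\Gout})_{\LtraceOut}|}{(\|u\|_{\Ltwo}^2+\|\hat u\|_{\LtraceOut}^2)^{1/2}} = \bigl(\|{-\vec b\nabla v + cv}\|_{\Ltwo}^2 + \|v|_{\Gout}\|_{\LtraceOut}^2\bigr)^{1/2},
\end{equation*}
since the supremum over a product Hilbert space of a linear functional is just the norm of its Riesz representative. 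Hence~\eqref{eq:transportPoincare} is equivalent to
\begin{equation*}
  \|v\|_{\Ltwo}^2 + \|v|_{\Gout}\|_{\LtraceOut}^2 \;\le\; C_p^2\,\bigl(\|{-\vec b\nabla v + cv}\|_{\Ltwo}^2 + \|v|_{\Gout}\|_{\LtraceOut}^2\bigr).
\end{equation*}
The boundary terms on both sides are identical, so the real content is to bound $\|v\|_{\Ltwo}$ in terms of the $\Ltwo$-norm of $w := -\vec b\nabla v + cv$ and the outflow trace of $v$. This is a Poincaré/Friedrichs inequality for the (backward) transport operator along characteristics.

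Under the second hypothesis ($c\ge\kappa>0$) this is easy: testing $w=-\vec b\nabla v+cv$ against $v$, integrating over $\Omega$, and using $\nabla\cdot\vec b=0$ to write $\int_\Omega (\vec b\nabla v)v = \tfrac12\int_\Gamma (\vec b\cdot\vec n)v^2 = \tfrac12\|v|_{\Gout}\|_{\LtraceOut}^2 - \tfrac12\|v|_{\Gin}\|_{\LtraceIn}^2$, gives $\int_\Omega w v = -\tfrac12\|v|_{\Gout}\|^2 + \tfrac12\|v|_{\Gin}\|^2 + \int_\Omega c v^2 \ge \kappa\|v\|_{\Ltwo}^2 - \tfrac12\|v|_{\Gout}\|_{\LtraceOut}^2$; Cauchy–Schwarz and Young then yield $\|v\|_{\Ltwo}^2 \le \kappa^{-1}\|w\|_{\Ltwo}\|v\|_{\Ltwo} + \text{(boundary)}$, from which the bound $C_p\le\max\{2,\kappa^{-1}\}$ drops out after absorbing terms. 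Under the first hypothesis ($\Omega$-filling) I would instead integrate along the characteristic curves $\xi(\cdot,x_\Gamma)$: writing $\phi(t) := v(\xi(t,x_\Gamma))$ along a characteristic starting at $x_\Gamma\in\Gin$, the transport term becomes $\phi'(t) = (\vec b\nabla v)(\xi(t,x_\Gamma)) = c(\xi)\phi(t) - w(\xi(t,x_\Gamma))$, a scalar linear ODE that one solves with the integrating factor; evaluating at the outflow endpoint and estimating backwards, $v(x)$ at a point reached after traverse time $\le T(x)$ is controlled by $|v|_{\Gout}|$ plus $\int_0^{T(x)}|w(\xi(s))|\,ds$ (using $c\ge0$ so the exponential factor is $\le1$ in the right direction). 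Squaring, applying Cauchy–Schwarz in the time integral (which produces the factor $T_{max}$), and then integrating over $\Omega$ via the co-area / change-of-variables along the flow (which is measure-preserving since $\nabla\cdot\vec b=0$) converts the characteristic-wise bound into the $\Ltwo$-estimate, with the constant $2T_{max}$ coming from the squared length of integration and the $\max\{1-T_{min},0\}$ term accounting for how the outflow trace contributes when characteristics are short.

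The main obstacle I expect is the bookkeeping in the $\Omega$-filling case: making the change of variables from $\Omega$ to (boundary point, time-along-flow) coordinates rigorous for a merely $C^1$, divergence-free field, handling the fact that $T$ is only in $\Linfty$ (not continuous) and justifying its trace on $\Gout$, and then tracking the constants carefully enough to land exactly on $C_p\le 2T_{max}+\max\{1-T_{min},0\}$ rather than some cruder bound. The density statement \ref{ass:surjectivity} is comparatively routine once the a priori estimate is in hand — it is the standard "bounded below $\Rightarrow$ closed range, plus trivial annihilator $\Rightarrow$ dense range" argument, where triviality of the annihilator uses uniqueness for the forward transport equation, again by characteristics. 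I would also need to be slightly careful that the formal computations (integration by parts with boundary traces) are justified on $\Ydense = C^\infty(\Omega)$ and extend to the relevant closure; since everything is stated for $v\in\Ydense$ this should pose no difficulty.
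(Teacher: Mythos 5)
Your overall architecture matches the paper's: you correctly identify $\norm{A_\circ^*[v]}_{\xcal'}$ with the $\xcal$-norm of the Riesz representative $(-\vec{b}\nabla v+cv,\,v\restr{\Gout})$, reduce everything to the Poincar\'e estimate, and your treatment of the case $c\ge\kappa>0$ is essentially the paper's proof (the paper simply tests the functional with the pair $(v,v\restr{\Gout})$, which is your computation with the boundary term carried along). The genuine divergence is the $\Omega$-filling case. You integrate the ODE along characteristics, square, and change variables back to $\Omega$; the paper instead uses a weighted multiplier: with $\rho(\xi(t,x)):=2t$, which satisfies $\vec{b}\nabla\rho\equiv 2$ a.e., $\rho=0$ on $\Gin$ and $2T_{min}\le\rho\le 2T_{max}$ on $\Gout$, one tests the functional with the single element $(\rho v,\rho v\restr{\Gout})$ and a single integration by parts gives
\begin{equation*}
A_\circ^*[v](\rho v,\rho v\restr{\Gout})\;\ge\;\Lnorm{v}^2+T_{min}\norm{v}_{\LtraceOut}^2 ,
\end{equation*}
which, combined with $|A_\circ^*[v](\rho v,\rho v\restr{\Gout})|\le 2T_{max}\norm{A_\circ^*[v]}_{\xcal'}\norm{v}_{\xcal}$ and the trivial estimate $\norm{v}_{\LtraceOut}\le\norm{A_\circ^*[v]}_{\xcal'}$ (testing with $(0,v\restr{\Gout})$) to compensate when $T_{min}<1$, lands exactly on $C_p\le 2T_{max}+\max\{1-T_{min},0\}$. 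The multiplier argument is the integrated form of your characteristics argument and buys two things: it avoids the co-area change of variables and the $\Linfty$-regularity issues for $T$ that you flag as obstacles, and it produces the stated constant. Your route, as sketched, yields a bound of the form $\norm{v}_\xcal^2\le 2T_{max}^2\Lnorm{w}^2+(2T_{max}+1)\norm{v}_{\LtraceOut}^2$ (the Cauchy--Schwarz in the time integral costs a factor $T_{max}^2$, not $T_{max}$), i.e.\ $C_p\le(2T_{max}^2+2T_{max}+1)^{1/2}$ with no $T_{min}$-dependence --- a valid Poincar\'e inequality, but not the bound asserted in the proposition. Since the constant is part of the statement, you would need to switch to the weight-function argument (or refine the characteristics bookkeeping substantially) to close this.

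On the density step, two remarks. The paper does not invoke the closed range theorem: it takes $w$ orthogonal to the range, passes to its Riesz representative $(w_1,w_2)\in\xcal$, tests first with $C_0^\infty(\Omega)$ to obtain $\vec{b}\nabla w_1=-cw_1$ distributionally (so that $w_1\in H^1(\vec{b},\Omega)$ and has traces at all), then with general $C^\infty(\Omega)$ to extract $w_1\restr{\Gin}=0$ and $w_1\restr{\Gout}=w_2$, and finally kills $w_1$ by applying \eqref{eq:transportPoincare} to the \emph{reversed} field $-\vec{b}$ with in- and outflow swapped. So the ``trivial annihilator'' claim is itself a consequence of the same a priori estimate rather than of a separate characteristics/uniqueness argument; if you go your route you must still perform the distributional bootstrap that upgrades $w_1\in\Ltwo$ to something with a trace, and you should note that the reversed field is again $\Omega$-filling with the same traverse times. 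These are fixable, but as written your density paragraph hides exactly the steps where the work is.
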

\begin{proof}
  see Appendix~\ref{appendix:adjointTransportOperator}
\end{proof}
Following, and in particular in Sec.~\ref{sec:numerics}, we will assume the flow field to be
$\Omega$-filling.\par
Using the characterization~\eqref{def:transport:adjointOperatorXcal} we see that for a given
$v\in \Ydense$ its Riesz-re\-pre\-sen\-ta\-tive $r_v := R^{-1}_{\xcal}(A_\circ^*[v])\in\xcal$ is given as
$r_v = (-b\nabla v + cv, v\restr{\Gout})$. We can thus identify the abstractly defined test space norm
\begin{equation*}
  \norm{v}_{\Ydense} := \norm{A_\circ^*[v]}_{\xcal'} = \norm{R^{-1}_{\xcal}(A_\circ^*[v])}_\xcal,
\end{equation*}
which is indeed a norm due to \ref{ass:injectivity}, with the representation
\begin{equation*}
\norm{v}_{\Ydense}^2 = \norm{r_v}_\xcal^2 = \Lnorm{-b\nabla v + cv}^2 + \norm{v\restr{\Gout}}_{\LtraceOut}^2.
\end{equation*}
The test space $\ycal$ is then defined by
\begin{equation*}
  \ycal := \clos_{\norm{\cdot}_{\Ydense}}(\Ydense)
\end{equation*}
equipped with the induced norm
\begin{equation}
\norm{y}_\ycal := \norm{A^*[y]}_{\xcal'}
\end{equation}
where $A^*:\ycal \rightarrow \xcal'$ denotes the continuous extension of $A_\circ^*$ to $\ycal$.

In the next section (Sec.~\ref{ssec:optimallyStableUWFormulation}) we will show that this space is 'optimal' in the sense that the variational formulation using the trial/test-pair $(\xcal,\ycal)$ has optimal condition number of one. In the remainder of this section we also want to characterize $\ycal$ in terms of a more accessible Sobolev-like space:
\begin{remark}
  One easily sees that the trace $\gamma(v) := v\restr{\Gout}$ for $v\in \Ydense$ is continuous and can thus
  be extended to a trace operator $\gamma: \ycal\rightarrow\LtraceOut$ with operator norm
  $\norm{\gamma}\leq 1$:
  \begin{equation*}
    \norm{v}_{\LtraceOut}^2 = |A_\circ^*[v](0,v\restr{\Gout})| \leq \norm{v}_\ycal\norm{v}_{\LtraceOut}
    \qquad\forall v\in \Ydense.
  \end{equation*}
\end{remark}
Functions in $\ycal$ also have a trace in $\LtraceIn$ as the following lemma shows:
\begin{lemma}\label{lemma:traceTheorem}
  There exists a linear and continuous trace operator
  \begin{equation*}
    \gamma: \ycal \rightarrow \LtraceIn, \qquad \norm{\gamma(v)}_{\LtraceIn}
    \leq \sqrt{2C_p}\norm{v}_{\ycal}
  \end{equation*}
  with $\gamma(v) = v\restr{\Gin}$ for all $v\in \Ydense\subset\ycal$.
\end{lemma}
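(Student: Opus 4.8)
The plan is to prove the quantitative bound on the smooth functions and then pass to the limit. Since $\ycal = \clos_{\norm{\cdot}_\ycal}(\Ydense)$ by definition and $\LtraceIn$ is a Banach space, it suffices to show that the classical restriction $v\mapsto v\restr{\Gin}$ satisfies $\norm{v\restr{\Gin}}_{\LtraceIn}\leq\sqrt{2C_p}\,\norm{v}_\ycal$ for every $v\in\Ydense = C^\infty(\Omega)$; the bounded-linear-transformation theorem then yields a unique continuous linear extension $\gamma\colon\ycal\to\LtraceIn$ with $\norm{\gamma}\leq\sqrt{2C_p}$ that agrees with the trace on $\Ydense\subset\ycal$.

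For the a priori estimate, I would start from the divergence theorem applied to the field $\vec{b}v^2\in C^1(\Omega,\R^d)$. Since $\vec{b}$ is divergence-free, $\nabla\cdot(\vec{b}v^2)=2v\,\vec{b}\cdot\nabla v$, and using the sign of $\vec{b}\cdot\vec{n}$ on the in- and outflow parts (the characteristic set $\{\vec{b}\cdot\vec{n}=0\}$ not contributing to the boundary integral) this gives
\[
  2\int_\Omega v\,(\vec{b}\cdot\nabla v)\,\diff x \;=\; \int_\Gamma v^2\,\vec{b}\cdot\vec{n}\,\diff s \;=\; \norm{v\restr{\Gout}}_{\LtraceOut}^2 - \norm{v\restr{\Gin}}_{\LtraceIn}^2 .
\]
Writing $\vec{b}\cdot\nabla v = cv - r$, where $r := -\vec{b}\cdot\nabla v + cv$ is the $\Ltwo$-component of the Riesz representative $r_v = (r, v\restr{\Gout})$ of $A_\circ^*[v]$ (so that $\Lnorm{r}^2 + \norm{v\restr{\Gout}}_{\LtraceOut}^2 = \norm{v}_\ycal^2$), then using $c\geq 0$ and the Cauchy--Schwarz inequality, I obtain
\[
  \norm{v\restr{\Gin}}_{\LtraceIn}^2 \;=\; \norm{v\restr{\Gout}}_{\LtraceOut}^2 - 2\int_\Omega cv^2\,\diff x + 2\int_\Omega v\,r\,\diff x \;\leq\; \norm{v\restr{\Gout}}_{\LtraceOut}^2 + 2\,\Lnorm{v}\,\Lnorm{r} .
\]

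It then remains to collapse the right-hand side to $2C_p\,\norm{v}_\ycal^2$. The crucial input is the Poincar\'e-type inequality of Proposition~\ref{prop:transport:poincare}: recalling $\norm{v}_\xcal^2 = \Lnorm{v}^2 + \norm{v\restr{\Gout}}_{\LtraceOut}^2$ and $\norm{A_\circ^*[v]}_{\xcal'}=\norm{v}_\ycal$, it reads $\Lnorm{v}^2 \leq C_p^2\,\norm{v}_\ycal^2 - \norm{v\restr{\Gout}}_{\LtraceOut}^2$. I would now bound the cross term by Young's inequality with weight $1/C_p$, $2\Lnorm{v}\Lnorm{r}\leq \tfrac1{C_p}\Lnorm{v}^2 + C_p\Lnorm{r}^2$, and insert the Poincar\'e bound together with $C_p\Lnorm{r}^2 = C_p\norm{v}_\ycal^2 - C_p\norm{v\restr{\Gout}}_{\LtraceOut}^2$. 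All contributions of $\norm{v\restr{\Gout}}_{\LtraceOut}^2$ then combine with coefficient $1 - C_p - \tfrac1{C_p} = -(C_p^2 - C_p + 1)/C_p$, which is strictly negative since $C_p^2-C_p+1$ has negative discriminant; dropping this nonpositive term leaves precisely $\norm{v\restr{\Gin}}_{\LtraceIn}^2 \leq 2C_p\,\norm{v}_\ycal^2$.

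The one genuinely delicate step is getting the constant exactly right: a crude Cauchy--Schwarz/Young estimate only yields the weaker $\sqrt{1+C_p^2}$, and landing on $\sqrt{2C_p}$ forces both the specific weight $1/C_p$ in Young's inequality and the use of the Poincar\'e inequality in its sharp form, that is, keeping the outflow boundary contribution on its left-hand side rather than merely using $\Lnorm{v}\leq C_p\norm{v}_\ycal$. The remaining ingredients --- the validity of the divergence theorem on the Lipschitz domain, the decomposition of $\Gamma$ into in-, out- and characteristic parts, and the density/extension step --- are routine.
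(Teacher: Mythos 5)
Your proposal is correct and follows essentially the same route as the paper: the same integration-by-parts identity (equivalent to testing $A_\circ^*[v]$ against the element $(v,v\restr{\Gout})\in\xcal$) combined with the Poincar\'e-type inequality of Proposition~\ref{prop:transport:poincare} and a density argument. The only difference is cosmetic: the paper bounds the resulting term in one step via Cauchy--Schwarz in the $\xcal$-duality pairing, $|A_\circ^*[v](v,v\restr{\Gout})|\leq\norm{A_\circ^*[v]}_{\xcal'}\norm{v}_{\xcal}\leq C_p\norm{v}_{\ycal}^2$, which packages your componentwise Young-inequality bookkeeping into a single estimate and lands on the identical constant $\sqrt{2C_p}$.
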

\begin{proof}
  For arbitrary $v\in \Ydense$, integration by parts yields
  \begin{equation*}
    A_\circ^*[v](v,v\restr{\Gout})
    = \tfrac{1}{2}\norm{v}_{\LtraceOut}^2 + \tfrac{1}{2}\norm{v}_{\LtraceIn}^2
    + \underbrace{\int_\Omega cv^2 \diff x}_{\geq 0}
    \geq \tfrac{1}{2}\norm{v}_{\LtraceIn}^2.
  \end{equation*}
  Using the definition of the dual-norm and the Poincaré-inequality \eqref{eq:transportPoincare}
  we obtain
  \begin{equation*}
    \norm{v}_{\LtraceIn}^2 \leq 2|A_\circ^*[v](v,v\restr{\Gout})|
    \leq 2\norm{A_\circ^*[v]}_{\xcal'}\norm{v}_\xcal \leq 2C_p\norm{A^*_\circ[v]}_{\xcal'}^2.
  \end{equation*}
  The statement for all $v\in\ycal$ again follows by density.
\end{proof}

\begin{definition}[Sobolev-space $\Honeb$]
  Define the following norm $\norm{\cdot}_{\Honeb}$ for $v\in \Ydense=C^\infty(\Omega)$:
  \begin{equation*}
    \norm{v}_{\Honeb}^2 := \Lnorm{\vec{b}\nabla v}^2 + \Lnorm{v}^2.
  \end{equation*}
  Then, the space $\Honeb$ is defined as the closure of $\Ydense$ under the norm.
\end{definition}

\begin{lemma}\label{lemma:normEquivalence}
  Let $\vec{b}$ be $\Omega$-filling and the minimal traverse time be bounded away from zero,
  \ie $T_{min}>0$. Then the norm equivalence
  \begin{equation*}
    c\norm{v}_{\Honeb} \quad\leq\quad \norm{v}_{\ycal} \quad\leq\quad C\norm{v}_{\Honeb}
  \end{equation*}
  holds with equivalence constants $c,\,C$ given by
  \begin{align*}
    c &= (2 + C_p^2(2\norm{c}_{\Linfty}^2+1))^{-\tfrac{1}{2}} \\
    C &= (\max \{2\norm{c}_{\Linfty}, T_{min}^{-1}(2T_{max}+1)\})^{\tfrac{1}{2}}
  \end{align*}
  where $C_p$ denotes the Poincaré-type constant from~\eqref{eq:transportPoincare}.
\end{lemma}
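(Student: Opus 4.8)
The plan is to establish the two-sided estimate by working with smooth functions $v\in\Ydense$ and then extending by density, since both $\norm{\cdot}_\ycal$ and $\norm{\cdot}_{\Honeb}$ are obtained as closures of the same smooth space. Recall the explicit representation
\[
  \norm{v}_\ycal^2 \;=\; \Lnorm{-\vec{b}\nabla v + cv}^2 + \norm{v\restr{\Gout}}_{\LtraceOut}^2 ,
  \qquad
  \norm{v}_{\Honeb}^2 \;=\; \Lnorm{\vec{b}\nabla v}^2 + \Lnorm{v}^2 .
\]
So both quantities are built from $\vec{b}\nabla v$, $v$, and a boundary trace; the whole argument is a bookkeeping exercise comparing these three ingredients, with the reaction term $cv$ acting as a bounded perturbation (note $\norm{cv}_{L^2}\le\norm{c}_{\Linfty}\norm{v}_{L^2}$) and the Poincaré inequality~\eqref{eq:transportPoincare} together with Lemma~\ref{lemma:traceTheorem} supplying the missing controls.

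For the \emph{upper bound} $\norm{v}_\ycal\le C\norm{v}_{\Honeb}$: expand $\Lnorm{-\vec{b}\nabla v+cv}^2 \le 2\Lnorm{\vec{b}\nabla v}^2 + 2\Lnorm{cv}^2 \le 2\Lnorm{\vec{b}\nabla v}^2 + 2\norm{c}_{\Linfty}^2\Lnorm{v}^2$, which is already bounded by $2\max\{1,\norm{c}_{\Linfty}^2\}\,\norm{v}_{\Honeb}^2$ — but to match the stated constant I would instead group as $2\Lnorm{\vec{b}\nabla v}^2 + 2\norm{c}_{\Linfty}\cdot\norm{c}_{\Linfty}\Lnorm{v}^2$ and keep a cleaner split. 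For the boundary term, integration by parts (exactly the identity used in the proof of Lemma~\ref{lemma:traceTheorem}) gives $\norm{v\restr{\Gout}}_{\LtraceOut}^2 \le 2A_\circ^*[v](v,v\restr{\Gout}) \le 2\Lnorm{-\vec{b}\nabla v+cv}\Lnorm{v} + 2\norm{v\restr{\Gout}}^2_{\LtraceOut}$ — that double-counts, so the right move is to use instead $\gamma_\ycal$-type control via the integral-curve representation of the $\Omega$-filling flow: along characteristics, $v\restr{\Gout}$ is obtained from $v$ on the inflow plus an integral of $\vec{b}\nabla v$, and the condition $T_{min}>0$ is what lets one bound $\norm{v\restr{\Gout}}_{\LtraceOut}^2$ by the $L^2$-mass of $v$ and of $\vec{b}\nabla v$ over the whole domain with the factor $T_{min}^{-1}(2T_{max}+1)$. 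Collecting these contributions yields the constant $C=(\max\{2\norm{c}_{\Linfty},\,T_{min}^{-1}(2T_{max}+1)\})^{1/2}$.

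For the \emph{lower bound} $c\norm{v}_{\Honeb}\le\norm{v}_\ycal$: here $\norm{v}_\ycal$ controls $\Lnorm{-\vec{b}\nabla v+cv}$ directly, and the Poincaré inequality~\eqref{eq:transportPoincare} gives $\Lnorm{v}\le\norm{v}_\xcal\le C_p\norm{v}_\ycal$, hence also control of $\Lnorm{cv}\le\norm{c}_{\Linfty}C_p\norm{v}_\ycal$; then $\Lnorm{\vec{b}\nabla v} \le \Lnorm{-\vec{b}\nabla v+cv} + \Lnorm{cv} \le (1+\norm{c}_{\Linfty}C_p)\norm{v}_\ycal$. Squaring and adding, $\norm{v}_{\Honeb}^2 = \Lnorm{\vec{b}\nabla v}^2 + \Lnorm{v}^2 \le \bigl((1+\norm{c}_{\Linfty}C_p)^2 + C_p^2\bigr)\norm{v}_\ycal^2$; a slightly more careful grouping (using $(a+b)^2\le 2a^2+2b^2$ on the first term and $2\Lnorm{-\vec{b}\nabla v+cv}^2\le 2\norm{v}_\ycal^2$) produces exactly $2 + C_p^2(2\norm{c}_{\Linfty}^2+1)$ inside the bracket, giving $c=(2+C_p^2(2\norm{c}_{\Linfty}^2+1))^{-1/2}$.

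I expect the main obstacle to be the boundary term in the upper bound: controlling $\norm{v\restr{\Gout}}_{\LtraceOut}$ by $\norm{v}_{\Honeb}$ cannot be done by a naive trace theorem (the $\Honeb$-norm only sees the \emph{directional} derivative $\vec{b}\nabla v$, not the full gradient), so one genuinely needs the transport structure — integrating along the integral curves $\xi(\cdot,x)$ of the $\Omega$-filling flow, using $\|T\|_{L^\infty(\Gout)}=T_{max}$ and $T_{min}>0$ to turn a pointwise boundary-to-interior identity into an $L^2$ estimate with a Jacobian/coarea change of variables. This is presumably where the hypothesis $T_{min}>0$ is indispensable and where the constant $T_{min}^{-1}(2T_{max}+1)$ is born; everything else is Cauchy–Schwarz, Young's inequality, and the already-proved Poincaré bound, followed by the routine density extension from $\Ydense$ to $\ycal$ and $\Honeb$.
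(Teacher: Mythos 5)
Your proposal follows essentially the same route as the paper's proof: for the upper bound, Young's inequality on the reaction term together with the trace estimate $\norm{v}_{\LtraceOut}^2\le T_{min}^{-1}(2T_{max}+1)\norm{v}_{\Honeb}^2$ (which is precisely the paper's Lemma~\ref{appendix:lemma:traceHoneb}, proved via the cutoff function $\rho(\xi(t,x))=t$ and integration by parts --- the rigorous form of the characteristics argument you sketch); for the lower bound, the triangle inequality on $\vec{b}\nabla v$ combined with the Poincar\'e estimate~\eqref{eq:transportPoincare}, yielding exactly the constant $(2+C_p^2(2\norm{c}_{\Linfty}^2+1))^{-1/2}$. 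Both constants and the density extension match the paper's argument, so the approach is the same.
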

\begin{proof}
  The proof is similar to~\cite[Prop.~2.6]{DahmenHuangSchwab}, however we obtain slightly different
  constants due to the additional boundary norm and the usage of the
  Poincaré-type estimate~\eqref{eq:transportPoincare}.\\
  In the following let $v\in \Ydense$ (the statement for all $v\in\ycal$ follows by density).
  \begin{itemize}
  \item Using the triangle inequality and Youngs theorem yields
    \begin{align*}
      \norm{v}_{\ycal}^2
      &= \Lnorm{-\vec{b}\nabla v + cv}^2 + \norm{v}_{\LtraceOut}^2 \\
      &\leq 2\Lnorm{-\vec{b}\nabla v}^2
      + 2\norm{c}_{\Linfty}^2 \, \Lnorm{v}^2 + \norm{v}_{\LtraceOut}^2 \\
      &\leq \max\{2, 2\norm{c}_{\Linfty}^2, T_{min}^{-1}(2T_{max}+1)\}\norm{v}_{\Honeb}^2 \\
      &= \underbrace{\max\{2\norm{c}_{\Linfty}^2, T_{min}^{-1}(2T_{max}+1)\}}_{=:C^2}
      \norm{v}_{\Honeb}^2.
    \end{align*}
    where in the last two steps we used a trace theorem for $\Honeb$
    (see Appendix~\ref{appendix:traceTheoremsH1b}) and the fact that $T_{min}\leq T_{max}$.
  \item To prove the lower bound we expand the streamline term in the $\Honeb$-norm
    and use Youngs theorem:
    \begin{align*}
      \norm{v}_{\Honeb}^2
      &\leq (\Lnorm{-\vec{b}\nabla v + c v} + \norm{c}_{\Linfty}\Lnorm{v})^2 \\
      & \qquad + \Lnorm{v}^2 + \norm{v}_{\LtraceOut}^2\\
      &\leq 2\norm{v}_{\ycal}^2 + (2\norm{c}_{\Linfty}^2 + 1)\Lnorm{v}^2.
    \end{align*}
    Using the norm inequality~\eqref{eq:transportPoincare} then yields the assertion.
  \end{itemize}
\end{proof}

\begin{corollary}\label{prop:testSpaceIdentification}
  Under the assumptions in Lemma~\ref{lemma:normEquivalence} the test space $\ycal$ is isomorphic
  to the Sobolev-space $\Honeb$.
\end{corollary}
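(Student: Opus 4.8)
The plan is to deduce the statement directly from the norm equivalence in Lemma~\ref{lemma:normEquivalence}, via the elementary principle that two equivalent norms on a common dense subspace yield isomorphic completions. By construction, $\ycal=\clos_{\norm{\cdot}_{\Ydense}}(\Ydense)$ and $\Honeb=\clos_{\norm{\cdot}_{\Honeb}}(\Ydense)$ are both completions of the same normed space $\Ydense=C^\infty(\Omega)$, differing only in the choice of norm; in particular each contains $\Ydense$ as a dense subspace. Under the hypotheses $\vec{b}$ $\Omega$-filling and $T_{min}>0$ (which are precisely the assumptions of Lemma~\ref{lemma:normEquivalence}, entering through $C_p$ and the trace theorem for $\Honeb$) we have $c\norm{v}_{\Honeb}\leq\norm{v}_{\ycal}\leq C\norm{v}_{\Honeb}$ for all $v\in\Ydense$ with finite constants $c,C>0$.

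First I would regard the identity on $\Ydense$ as a map $\iota\colon(\Ydense,\norm{\cdot}_{\Honeb})\to(\Ydense,\norm{\cdot}_{\Ydense})$; by the upper estimate it is bounded, hence it sends $\norm{\cdot}_{\Honeb}$-Cauchy (resp.\ $\norm{\cdot}_{\Honeb}$-null) sequences to $\norm{\cdot}_{\Ydense}$-Cauchy (resp.\ null) sequences, and therefore extends uniquely to a bounded linear map $\bar{\iota}\colon\Honeb\to\ycal$, $\bar{\iota}(\lim_k v_k):=\lim_k v_k$. By the lower estimate the same reasoning applied to $\iota^{-1}$ produces a bounded linear extension $\bar{\iota}^{-1}\colon\ycal\to\Honeb$. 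The two compositions $\bar{\iota}\circ\bar{\iota}^{-1}$ and $\bar{\iota}^{-1}\circ\bar{\iota}$ are continuous and coincide with the identity on the dense subspace $\Ydense$, hence equal the identity everywhere; thus $\bar{\iota}$ is a linear homeomorphism and $\ycal\cong\Honeb$.

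The argument has no genuine obstacle once Lemma~\ref{lemma:normEquivalence} is available: it is the standard completion/extension argument, and the resulting identification is only a topological isomorphism (the two norms are equivalent but not equal, so one should not expect an isometry). The single point worth making explicit is that both $\ycal$ and $\Honeb$ are defined as closures of the \emph{same} dense subspace $\Ydense$, which is exactly what legitimizes extending the densely defined identity and guarantees that the extension is unique.
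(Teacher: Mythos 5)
Your argument is correct and is precisely the (implicit) reasoning the paper relies on: the corollary is stated without proof as an immediate consequence of Lemma~\ref{lemma:normEquivalence}, since $\ycal$ and $\Honeb$ are both completions of the same space $\Ydense$ under equivalent norms. Your explicit write-up of the standard extension-of-the-identity argument, including the observation that the result is only a topological (not isometric) isomorphism, fills in exactly what the paper leaves to the reader.
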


We briefly want to discuss the assumptions we made in Lemma~\ref{lemma:normEquivalence}:

\begin{remark}
  The identification of $\ycal$ with $\Honeb$ is valid under the assumption of an $\Omega$-filling flow
  and a positive minimal traverse time. While the first assumption is quite natural there are very
  simple examples where the traverse time is not bounded from below, even for domains with
  $C^\infty$-boundary (consider for example the constant velocity field in $x$-direction
  $\vec{b} \equiv e_1$ defined on the unit disc). However, even in those cases we still have
  the following chain of continuous embeddings:
  \begin{equation*}
    H^1(\Omega) \;\hookrightarrow\; \ycal \;\hookrightarrow\; \Honeb.
  \end{equation*}
\end{remark}

\subsection{Optimally stable ultraweak formulation and normal equation}
\label{ssec:optimallyStableUWFormulation}
The constructed test space $\ycal$ is optimal in the sense that it contains all the supremizers
of $\xcal$. Therefore, the resulting variational formulation can be shown to be
\textit{optimally stable}, \ie has continuity- and inf-sup-constant of one:

\begin{proposition}[{\cite[Prop.\ 2.1]{DahmenHuangSchwab}}]\label{prop:genericIsometry}
  Let the assumptions \ref{ass:injectivity} and \ref{ass:surjectivity} hold. Then, the mappings
  $A: \xcal\rightarrow\ycal'$ and $A^*:\ycal\rightarrow \xcal'$ are isometries, \ie,
  \begin{equation*}
    \ycal = A^{-*}\xcal', \quad \xcal = A^{-1}\ycal'
  \end{equation*}
  and
  \begin{equation*}
    \norm{A}_{\mathcal{L}(\xcal,\ycal')} = \norm{A^*}_{\mathcal{L}(\ycal,\xcal')}
    = \norm{A^{-1}}_{\mathcal{L}(\ycal',\xcal)} = \norm{A^{-*}}_{\mathcal{L}(\xcal',\ycal)}
    = 1.
  \end{equation*}
\end{proposition}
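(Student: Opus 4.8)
The plan is to exploit the specific construction of the test-space norm $\|v\|_{\ycal} = \|A^*[v]\|_{\xcal'}$, which was built precisely to make $A^*$ an isometry by fiat, and then transfer this to $A$ by a duality argument. The only non-formal ingredients are assumptions~\ref{ass:injectivity} and~\ref{ass:surjectivity}, which guarantee that $A^*$ is an injective operator with dense range, so that the norm $\|\cdot\|_{\ycal}$ is genuinely a norm and $A^*$ extends to an isometric isomorphism onto its closure.

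First I would observe that, by the very definition of $\norm{\cdot}_{\Ydense}$ on the dense subspace $\Ydense \subset \ycal$, one has $\|v\|_{\ycal} = \|A_\circ^*[v]\|_{\xcal'}$ for all $v \in \Ydense$; passing to the closure, the continuous extension $A^* : \ycal \to \xcal'$ satisfies $\|A^*[v]\|_{\xcal'} = \|v\|_{\ycal}$ for every $v \in \ycal$, so $A^*$ is a linear isometry. It is injective (being an isometry) and its range is closed in $\xcal'$ (again because it is an isometry onto its image); since assumption~\ref{ass:surjectivity} says $\operatorname{rg}(A^*)$ is dense in $\xcal'$, a closed dense subspace must be all of $\xcal'$, hence $A^*$ is onto. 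Therefore $A^* : \ycal \to \xcal'$ is an isometric isomorphism, which gives both $\ycal = A^{-*}\xcal'$ and $\norm{A^*}_{\mathcal{L}(\ycal,\xcal')} = \norm{A^{-*}}_{\mathcal{L}(\xcal',\ycal)} = 1$.

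Next I would deduce the statements for $A$ by duality. Since $\xcal$ and $\ycal$ are Hilbert spaces and $A : \xcal \to \ycal'$ is (the extension of) the operator whose Banach-space adjoint is $A^*$ — this is exactly the relation $\langle A u, v\rangle = \langle u, A^* v\rangle$ encoded in the variational form~\eqref{eq:introduction:generalProblem} — the general fact that an operator and its adjoint have the same operator norm, together with the fact that the adjoint of an isometric isomorphism is again an isometric isomorphism, yields that $A : \xcal \to \ycal'$ is an isometric isomorphism with $\norm{A}_{\mathcal{L}(\xcal,\ycal')} = \norm{A^{-1}}_{\mathcal{L}(\ycal',\xcal)} = 1$, and $\xcal = A^{-1}\ycal'$. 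Concretely one checks $\|Au\|_{\ycal'} = \sup_{v \neq 0} |\langle Au, v\rangle| / \|v\|_{\ycal} = \sup_{v\neq 0} |\langle u, A^*v\rangle|/\|A^*v\|_{\xcal'} = \sup_{\phi \neq 0, \phi \in \operatorname{rg}(A^*)} |\langle u,\phi\rangle|/\|\phi\|_{\xcal'} = \|u\|_{\xcal}$, where the last equality uses surjectivity of $A^*$ so that the supremum runs over all of $\xcal'$.

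The main obstacle, such as it is, is purely a matter of care rather than depth: one must make sure the continuous extensions of $A_\circ^*$ (to $\ycal$) and of $A_\circ$ (to $\xcal$) are mutually adjoint in the Banach-space sense, i.e. that the identity $\langle Au, v\rangle_{\ycal' \times \ycal} = \langle u, A^*v\rangle_{\xcal \times \xcal'}$ — obvious on the smooth dense subspaces by integration by parts — survives the passage to the closures. This follows from the joint continuity of the bilinear pairing and the density of $\Xdense$, $\Ydense$, but it is the only place where one genuinely has to invoke the density and the norm definitions together; everything else is the standard two-line argument that a densely-defined linear isometry with dense range is an isometric isomorphism, applied once to $A^*$ and once, via duality, to $A$. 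Since this is quoted as \cite[Prop.~2.1]{DahmenHuangSchwab}, I would at this point simply refer to that reference for the remaining details.
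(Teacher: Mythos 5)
Your argument is correct and is exactly the standard proof of this statement: $A^*$ is an isometry by the very construction of $\norm{\cdot}_{\ycal}$, injectivity and dense range (assumptions \ref{ass:injectivity}, \ref{ass:surjectivity}) upgrade it to an isometric isomorphism onto $\xcal'$ since an isometric image of a complete space is closed, and the statements for $A$ follow by duality using reflexivity of the Hilbert space $\xcal$. The paper itself gives no proof but defers entirely to \cite[Prop.~2.1]{DahmenHuangSchwab}, where the argument is the same as yours, so there is nothing to reconcile.
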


\begin{corollary}[{\cite[Prop.\ 2.1]{DahmenHuangSchwab}}]\label{cor:optimalCondition}
The variational formulation
\begin{equation}\label{eq:transport:continuousFormulation}
  \text{Find}\;u\in\xcal: \qquad a(u,v) = f(v) \qquad \forall v\in\ycal
\end{equation}
with $a(u,v) :=$ $(Au,v)_{\ycal'\times\ycal} =$ $(u,A^*v)_{\xcal\times\xcal'}$ and $f\in\ycal'$ is well-posed and
has condition number $\kappa_{\xcal,\ycal}(A) := \norm{A}\,\norm{A^{-1}} = 1$.
\end{corollary}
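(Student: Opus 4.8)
The plan is to deduce the statement almost immediately from Proposition~\ref{prop:genericIsometry}, since Corollary~\ref{cor:optimalCondition} is essentially a reformulation of the isometry property in variational language. First I would observe that, by the definition $a(u,v) = (Au,v)_{\ycal'\times\ycal}$, the variational problem~\eqref{eq:transport:continuousFormulation} is equivalent to the operator equation $Au = f$ in $\ycal'$. Proposition~\ref{prop:genericIsometry} states that $A:\xcal\to\ycal'$ is a bijective isometry (surjectivity being exactly the statement $\xcal = A^{-1}\ycal'$, injectivity following from $\norm{Au}_{\ycal'} = \norm{u}_{\xcal}$), so for every $f\in\ycal'$ there is a unique solution $u = A^{-1}f\in\xcal$, which establishes well-posedness. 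For the condition number I would then simply invoke the norm identities $\norm{A}_{\mathcal{L}(\xcal,\ycal')} = \norm{A^{-1}}_{\mathcal{L}(\ycal',\xcal)} = 1$ from the same proposition, giving $\kappa_{\xcal,\ycal}(A) = \norm{A}\,\norm{A^{-1}} = 1$.

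Alternatively, to exhibit the stability of the bilinear form explicitly, I would verify the hypotheses of the Banach--Ne\v{c}as--Babu\v{s}ka theorem directly. Continuity follows from the isometry: $|a(u,v)| = |(Au,v)_{\ycal'\times\ycal}| \le \norm{Au}_{\ycal'}\norm{v}_{\ycal} = \norm{u}_{\xcal}\norm{v}_{\ycal}$, so the continuity constant is $1$. The inf-sup condition reads $\inf_{u\in\xcal}\sup_{v\in\ycal}\frac{|a(u,v)|}{\norm{u}_{\xcal}\norm{v}_{\ycal}} = \inf_{u\in\xcal}\frac{\norm{Au}_{\ycal'}}{\norm{u}_{\xcal}} = 1$, again by the isometry. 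Finally, the nondegeneracy condition---for every $0\neq v\in\ycal$ there exists $u\in\xcal$ with $a(u,v)\neq 0$---follows by writing $a(u,v) = (u,A^*v)_{\xcal\times\xcal'}$ and using that $A^*$ is injective (it is an isometry by Proposition~\ref{prop:genericIsometry}), so $A^*v\neq 0$ and hence some $u\in\xcal$ pairs nontrivially with it. Together these three facts yield well-posedness with both the inf-sup and the continuity constant equal to $1$, and therefore $\kappa_{\xcal,\ycal}(A) = 1$.

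I do not expect any real obstacle here: all of the analytic content has already been absorbed into Proposition~\ref{prop:genericIsometry} (which itself rests on the Poincar\'e-type estimate of Proposition~\ref{prop:transport:poincare} together with assumptions~\ref{ass:injectivity} and~\ref{ass:surjectivity}). The only point requiring a modicum of care is keeping the two representations $a(u,v) = (Au,v)_{\ycal'\times\ycal} = (u,A^*v)_{\xcal\times\xcal'}$ consistent and ensuring the duality pairings involved are the canonical ones, so that ``$A$ is an isometry'' translates verbatim into ``the inf-sup and continuity constants are $1$''.
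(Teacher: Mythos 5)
Your proposal is correct and matches the paper's (implicit) treatment: the paper states this corollary without its own proof, presenting it as an immediate consequence of Proposition~\ref{prop:genericIsometry} (both results being cited from the same reference), which is exactly your first argument. Your alternative verification of the Banach--Ne\v{c}as--Babu\v{s}ka conditions is a valid, slightly more explicit route to the same conclusion and introduces no gaps.
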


If we use the particular right-hand side
$f(v) := {(f_\circ, v)}_{\Ltwo} - {(g_D, v)}_{\LtraceIn}$
we obtain the ultraweak formulation introduced in~\cite[Def.1]{renelt2023}. Formally, it remains
to verify that $f$ is indeed an element of $\ycal'$ which, however, directly follows from
Lemma~\ref{lemma:traceTheorem} and \eqref{eq:transportPoincare}.
\begin{corollary}[Recovery of strong solutions]
  If the ultraweak solution $(u,\hat{u})\in\xcal$ is sufficiently regular,
  \ie~$u\in C^1(\overline{\Omega})$, then $(u,\hat{u})$ also solve the strong formulation
  \begin{equation}\label{eq:transport:strongFormulation}
    A_\circ u = f_\circ \quad\text{in}\;\Omega,
    \qquad  u = g_D \quad\text{on}\;\Gin,
    \qquad \hat{u} = u \quad\text{on}\;\Gout.
  \end{equation}
\end{corollary}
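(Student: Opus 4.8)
The plan is to undo the integration by parts that produced the ultraweak form. By definition $(u,\hat u)\in\xcal$ satisfies $a((u,\hat u),v)=f(v)$ for all $v\in\ycal$; written out via the explicit expression~\eqref{def:transport:adjointOperatorXcal} for $A_\circ^*$ and the definition of $f$, this is an identity between $\Ltwo$- and boundary integrals in $v$. Since $u\in C^1(\overline{\Omega})$ and $\vec{b}\in C^1(\Omega,\R^d)$ is divergence-free, $A_\circ u=\nabla\cdot(\vec{b}u)+cu$ is continuous on $\overline{\Omega}$, hence lies in $\Ltwo$, and $u$ has a classical trace on $\Gamma$. Integrating the volume term $(u,-\vec{b}\nabla v+cv)_{\Ltwo}$ by parts on the Lipschitz domain $\Omega$ (divergence theorem, using $\nabla\cdot\vec{b}=0$) and splitting the resulting boundary integral along $\Gin$ and $\Gout$, where $\vec{b}\cdot\vec{n}=\mp|\vec{b}\vec{n}|$ respectively, recasts the identity as
\begin{equation*}
  (A_\circ u-f_\circ,v)_{\Ltwo}+(\hat u-u,v\restr{\Gout})_{\LtraceOut}+(u-g_D,v\restr{\Gin})_{\LtraceIn}=0\qquad\forall v\in\ycal,
\end{equation*}
in which the two boundary terms are well defined because the trace maps $\ycal\to\LtraceOut$ and $\ycal\to\LtraceIn$ are continuous (Lemma~\ref{lemma:traceTheorem} for the inflow trace, and $\norm{\gamma(v)}_{\LtraceOut}\le\norm{v}_{\ycal}$ for the outflow trace).

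Next I would read off the three relations of~\eqref{eq:transport:strongFormulation} one at a time by choosing suitable test functions. Testing against $v\in C_0^\infty(\Omega)\subset\Ydense\subset\ycal$ annihilates both boundary terms and leaves $(A_\circ u-f_\circ,v)_{\Ltwo}=0$; since $A_\circ u-f_\circ\in\Ltwo$ and $C_0^\infty(\Omega)$ is dense in $\Ltwo$, the fundamental lemma of the calculus of variations yields $A_\circ u=f_\circ$ almost everywhere in $\Omega$. The identity then collapses to $(\hat u-u,v\restr{\Gout})_{\LtraceOut}+(u-g_D,v\restr{\Gin})_{\LtraceIn}=0$ for all $v\in\ycal$. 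Because $\Gin$ and $\Gout$ partition $\Gamma$ up to the set $\{\vec{b}\cdot\vec{n}=0\}$, on which the weight $|\vec{b}\vec{n}|$ vanishes, the joint trace $v\mapsto(v\restr{\Gin},v\restr{\Gout})$ has dense range in $\LtraceIn\times\LtraceOut$; choosing a sequence in $\ycal$ whose inflow trace tends to $0$ in $\LtraceIn$ while its outflow trace approximates $\hat u-u$ in $\LtraceOut$ forces $\hat u=u$ on $\Gout$, and the symmetric choice forces $u=g_D$ on $\Gin$, completing the recovery of~\eqref{eq:transport:strongFormulation}.

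The analytical content is light — the divergence theorem for $C^1$ fields on Lipschitz domains and the fundamental lemma of the calculus of variations are both routine — so I do not expect a serious obstacle. The step that warrants care, and is really the point of the statement, is the last one: one must know that the optimal test space $\ycal$ carries \emph{enough} functions with essentially arbitrary inflow and outflow traces to decouple the two boundary contributions, which is exactly what pins the auxiliary unknown $\hat u$ to the genuine outflow trace of the regular solution $u$. I would justify this by the same localisation argument used for Lemma~\ref{lemma:diracSequence}, reducing it to the density of smooth boundary traces in $L^2(\Gamma,|\vec{b}\vec{n}|\,\diff s)$ together with the degeneracy of the weight on $\{\vec{b}\cdot\vec{n}=0\}$. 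A little bookkeeping is also needed to keep the orientation of $\vec{n}$, and hence the boundary signs, consistent throughout the integration by parts.
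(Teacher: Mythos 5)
The paper states this corollary without proof, so there is no in-text argument to compare against; your proposal supplies the standard one, and it is correct. Reversing the integration by parts that defined $A_\circ^*$, localizing with $C_0^\infty(\Omega)$ test functions to recover $A_\circ u=f_\circ$ a.e.\ (which needs exactly your hypothesis $u\in C^1(\overline{\Omega})$ to make $A_\circ u$ an $L^2$ function), and then decoupling the two boundary residuals is precisely what the authors implicitly rely on. Your identification of the genuine content --- that $\ycal$ contains functions whose inflow and outflow traces can be prescribed essentially independently in the weighted $L^2$ spaces --- is the right one, and the cutoff/density argument you point to (in the spirit of Lemma~\ref{lemma:diracSequence}, using that the weight $|\vec{b}\vec{n}|$ degenerates on the interface $\{\vec{b}\cdot\vec{n}=0\}$ separating $\Gin$ from $\Gout$) does close that gap. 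Two small cautions: first, the sign bookkeeping you flag really is delicate --- with the unsigned weight $|\vec{b}\vec{n}|$ on $\Gin$, undoing the integration by parts places $+(g_D,v)_{\LtraceIn}$ on the right-hand side, so your final term $(u-g_D,v\restr{\Gin})_{\LtraceIn}$ is the physically correct one but requires reading the paper's $f(v)=(f_\circ,v)_{\Ltwo}-(g_D,v)_{\LtraceIn}$ with the appropriate orientation convention; second, when you extend the tested identity from $\Ydense$ to all of $\ycal$, note that continuity of the volume pairing $v\mapsto(A_\circ u-f_\circ,v)_{\Ltwo}$ in $\norm{\cdot}_\ycal$ follows from the Poincar\'e-type inequality~\eqref{eq:transportPoincare} (which bounds $\Lnorm{v}$ by $\norm{v}_\ycal$), though in fact testing only against the dense subset $\Ydense$ already suffices for every step of your argument.
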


Due to Proposition~\ref{prop:genericIsometry} for every $u\in\xcal$ there exists a unique $w\in\ycal$
with $u = R^{-1}_{\xcal} A^*w$. This allows us to write
Problem~\eqref{eq:transport:continuousFormulation} in the following mixed form:
\begin{equation}\label{eq:generalSaddlepointProblem}
\text{Find}\; u\in\xcal,\; w\in\ycal: \qquad
\begin{cases}
R_\xcal u + (-A^*)w &= 0 \qquad \text{in } \xcal', \\
Au  &= f \qquad \text{in } \ycal'. \\
\end{cases}
\end{equation}
The second equation is the ultraweak formulation of our PDE while the first equation forces $u$ to be an
element of the range of $R^{-1}_{\xcal} A^*$ which nicely shows the saddlepoint structure of the
problem. By eliminating $u$ we obtain the symmetric problem
\begin{equation}
  \text{Find}\; w\in\ycal: \qquad AR^{-1}_{\xcal} A^*w = f \qquad \text{in}\; \ycal'.
\end{equation}
or equivalently
\begin{equation}\label{eq:transport:continuousNormalEq}
\text{Find}\; w\in\ycal: \qquad \hat{a}(w,v) = f(v) \qquad \forall v\in\ycal.
\end{equation}
with $\hat{a}(w,v) := (A^* w, A^*v)_{\xcal'}$
which we will call the \textit{normal equation} associated with
Problem~\eqref{eq:transport:continuousFormulation}.
Note, that due to Prop.~\ref{prop:genericIsometry} this
is in fact an equivalent formulation of the `classic' normal equation
\begin{equation}\label{eq:generalNormalEq}
  \text{Find}\;u\in\xcal:\quad A^*R^{-1}_{\ycal}Au = A^*R^{-1}_{\ycal}f \qquad\text{in}\;\xcal'.
\end{equation}
or (in variational form)
\begin{equation} \label{eq:generalSymmetricProblem2}
\text{Find}\; u\in\xcal: \qquad (Au,Av)_{\ycal'} = (f,Av)_{\ycal'} \qquad \forall v\in\xcal.
\end{equation}
In this sense our approach can also be interpreted as an (FOS)$\LLstar$-method~\cite{caiFOSLL} for minimizing the
residual in the \textit{dual norm}, \ie
\begin{equation*}
u \;=\; \operatorname{argmin}_{\tilde{u}\in\xcal}\norm{A\tilde{u}-f}_{\ycal'}.
\end{equation*}
Since we use the optimal test space norm and thus have a closed form of the Riesz-representative
$R^{-1}_{\xcal} A^*[v] = (-\vec{b}\nabla v + cv, \gamma_{out}(v))$ we can also explicitly write
the normal equation~\eqref{eq:transport:continuousNormalEq} as
\begin{equation}\label{eq:transport:explicitContinuousNormalEq}
 {(-b\nabla w + cw, -b\nabla v + cv)}_{\Ltwo} + {(\gamma(w),\gamma(v))}_{\LtraceOut} \quad =\quad f(v).
\end{equation}

\section{Discretization}\label{sec:discretization}
For the subsequent numerical experiments we discretize the normal
equation~\eqref{eq:transport:explicitContinuousNormalEq} using Galerkin-projection and obtain the
discrete solution $w^\delta$ as an element of a finite dimensional subspace
$\ycal^\delta \subseteq \ycal$ solving
\begin{equation}\label{eq:discreteNormalEq}
\text{Find}\; w^\delta\in\ycal^\delta: \qquad \hat{a}(w^\delta,v^\delta) = f(v^\delta) \qquad\forall v^\delta\in\ycal^\delta.
\end{equation}
As discussed in~\cite{renelt2023} this only requires a discretization of the optimal test space
$\ycal$ since the associated trial space $\xcal = R^{-1}_{\xcal} A^*[\ycal]$ is just required for
the reconstruction. However, we never actually use the full discrete solution
$u^\delta := R^{-1}_{\xcal}A^*[w^\delta]$ but only functional evaluations of
$w^\delta$ and can thus omit a
discretization of $\xcal$. Note, that the projected normal equation~\eqref{eq:discreteNormalEq}
automatically inherits the optimal continuity and coercivity.
Considering the Petrov-Galerkin formulation
\begin{equation}\label{eq:discretePetrovGalerkinEq}
  \text{Find}\; u^\delta\in\xcal^\delta: \qquad (u^\delta, A^*[v^\delta])_{\xcal\times\xcal'} = f(v^\delta) \qquad\forall v^\delta\in\ycal^\delta.
\end{equation}
we see that this formulation is optimally stable for the specific choice of the trial space
$\xcal^\delta := R^{-1}_{\xcal} A^*[\ycal^\delta]$.

\begin{remark}\label{remark:quasiOptimalNormalEq}
  The normal equation~\eqref{eq:transport:continuousNormalEq} can also be formulated for any other
  norm on $\ycal$ equivalent to the optimal norm. However, in most cases it is then no longer possible
  to obtain a closed form of the inverse Riesz-map (and thus of the normal equations) and one has to
  additionally discretize the trial space $\xcal$.
\end{remark}

\subsection{A-priori error analysis}
By applying Céa's lemma to~\eqref{eq:discreteNormalEq} we obtain that the adjoint solution
$w^\delta$ is the best-approximation in the operator-dependent norm $\norm{\cdot}_\ycal$, \ie
\begin{equation}\label{eq:bestApproximationTestspace}
  \norm{w - w^\delta}_{\ycal} = \inf_{\tilde{w}\in\ycal^\delta}\norm{w - \tilde{w}}_{\ycal}.
\end{equation}
This is exactly equivalent to the fact that the reconstruction $u^\delta$ is the
$\xcal$-best-ap\-pro\-xi\-ma\-tion in the non-standard discrete space
$\xcal^\delta = R^{-1}_{\xcal} A^*[\ycal^\delta]$, \ie
\begin{equation}\label{eq:bestApproximationReconstruction}
  \norm{u - u^\delta}_{\xcal} = \inf_{\tilde{u}\in\xcal^\delta}\norm{u - \tilde{u}}_{\xcal}
\end{equation}
where $u = R^{-1}_{\xcal} A^*[w]\in\xcal$ denotes the continuous solution
to~\eqref{eq:transport:continuousFormulation}. This illustrates one of the drawbacks of the
optimal trial method and related methods (such as $\LLstar$- or DPG*-methods):
Classic interpolation arguments to deduce convergence rates are generally only possible
in formulation~\eqref{eq:bestApproximationTestspace} since $\ycal^\delta$ is the discrete space
we actually prescribe and an interpolation operator $I: \xcal \rightarrow \xcal^\delta$ with
useful properties is typically difficult to construct. Therefore, the rate of convergence is
limited by the regularity of the continuous adjoint solution $w$. Even for smooth $u$,
$w$ might only be smooth in the interior as discussed in~\cite{keithAPriori}.

\begin{remark}\label{rem:transport:convergenceRates}
  In the case of linear advection, we expect the following convergence rates:
  Assuming that the continuous adjoint solution $w$ is regular enough, we can use standard a-priori
  results for the Lagrange-space $\ycal^\delta := \mathbb{Q}^k(\mathcal{T}_h)$ to obtain the
  worst-case bound
  \begin{equation*}
    \norm{u - u^\delta}_{\xcal} = \norm{w - w^\delta}_{\ycal}
    \leq \inf_{\tilde{w}\in\ycal^\delta}\norm{w - \tilde{w}}_{H^1(\Omega)}
    \leq C|w|_{H^{s+1}(\Omega)}|h|^{s}.
  \end{equation*}
  for $s\in\{1,...,k\}$, \ie at least $k$-th order convergence.
  Higher rates can only be expected in special cases.
\end{remark}

For detailed analysis and discussion we refer to~\cite{keithAPriori,demkowiczDPGstar}.
Experimental convergence rates for transport problems with spatially constant advection
and inflow conditions of varying regularity have been computed in~\cite{BrunkenSmetanaUrban}.
In Section~\ref{sec:numerics} we will complement these results with experimental data for
a more complex model problem motivated by the reactive transport in a catalytic filter.

\section{Approximability of the parametrized problem}\label{sec:approximability}
\subsection{The parametrized problem}
From now on we consider the advection-reaction problem
\begin{equation}\label{eq:parametrizedStrongProblem}
  \text{Find}\; u_\mu\in\xcal: \qquad
  \begin{cases}
    \nabla\cdot(\vec{b} u_\mu) + c_\mu u_\mu &= f_{\circ,\mu} \qquad\text{in}\;\Omega, \\
    \hfill u_\mu &= g_{D,\mu} \qquad
    \text{on}\;\Gin
  \end{cases}
\end{equation}
with parameter-dependent data functions $c_\mu$, $f_\mu$ and $g_{D,\mu}$.
We assume that the parameter $\mu$ lies in a compact set $\Pcal\subset\R^p$.
Note, that the velocity field is not parameter-dependent.
For a fixed $\mu$ we can now apply the variational setting detailed in
Sec.~\ref{sec:transport} and obtain the problem
\begin{equation}\label{eq:parametrizedVariationalProblem}
  \text{Find}\; u_\mu\in\xcal: \quad {(u_\mu,A_\mu^*[v])}_{\xcal \times \xcal'} = f_\mu(v)
  \qquad \forall v\in\ycal_\mu.
\end{equation}
which is well-posed and optimally stable. Due to the parameter-dependency in the adjoint operator
$A_\mu^*$ the test space norm $\norm{v}_\mu := \norm{A_\mu^*[v]}_{\xcal'}$ and the resulting
test space $\ycal_\mu := \clos_{\norm{\cdot}_\mu}(\Ydense)$ differ for each parameter.

\subsection{Approximation theory}
Based on problem~\eqref{eq:parametrizedVariationalProblem} we define the set of all solutions
\begin{equation*}
  \Mcal := \{u_\mu \;|\; \mu\in\Pcal,\;
  u_\mu \;\text{solves}\;\eqref{eq:parametrizedVariationalProblem} \} \quad \subseteq \xcal,
\end{equation*}
which we call the `solution manifold'.
The core idea of model order reduction is to approximate this set by low-dimensional
linear subspaces $X_N \subset \xcal$. Therefore, it is crucial that the Kolmogorov $N$-width
\begin{equation}\label{eq:kolmogorovNWidth}
  d_N(\Mcal) := \Inf_{\dim(U_N)=N}\; \sup_{u\in\Mcal}\; \Inf_{u_N\in U_N}, \norm{u-u_N}_\xcal
\end{equation}
which describes the approximation error made by the best possible $N$-dimensional linear
subspaces, decays quickly. Although the optimal space will rarely be computable we can still hope
to construct \textit{quasi-optimal spaces} whose approximation error decays with a similar rate.
It is known that for coercive or inf-sup-stable problems one can expect exponential decay
of $d_N(\Mcal)$ under a few additional assumptions such as
parameter-separability~\cite{OhlbergerRave,urban2023reduced}.
In order to apply these results in the context of ultraweak
formulations we need to show that the parameter-dependent spaces
$\ycal_\mu$ are 'similar enough' which we formalize as follows:

\begin{proposition}\label{prop:kolmogorovUltraweak}
  Let $\Mcal\subseteq\xcal$ be the solution manifold for a (general) parametrized ultraweak problem
  in the form
  \begin{equation}\label{eq:approximability:generalProblem}
    \text{Find}\;u_\mu\in \xcal:\quad (u_\mu,A_\mu^*[v])_{\xcal\times\xcal'}
    \;=\; f_\mu(v) \qquad \forall v\in\ycal_\mu.
  \end{equation}
  Denote by $\hat{a}_\mu(w,v) := (A^*_\mu [w],A^*_\mu [v])_{\xcal'}$ the bilinear form of the associated normal equation.
  Assume that the following holds:
  \begin{enumerate}
  \item The parameter-independent space $\ycal_0 := \cap_{\mu\in\Pcal} \ycal_\mu$
    is dense in every $\ycal_\mu$.
  \item $\ycal_0$ can be equipped with a norm $\norm{\cdot}_0$ equivalent to the optimal test norm
    $\norm{v}_\mu = \norm{A_\mu^*[v]}_{\xcal'}$, \ie there exist $\mu$-independent equivalence
    constants $C,c > 0$ such that
    \begin{equation}\label{eq:approximability:normEquivalence}
      c\norm{v}_0 \;\leq\; \norm{v}_\mu \;\leq\; C\norm{v}_0.
    \end{equation}
  \item $\hat{a}_\mu$ and $f_\mu$ are parameter-separable on $\ycal_0$,
  \ie there exist continuous functions
    $\theta^b_i, \theta^f_i: \Pcal \rightarrow \R$ and (bi-)linear and continuous functionals
    $\hat{a}_i: \ycal_0\times\ycal_0 \rightarrow \R$, $f_i: \ycal_0\rightarrow\R$ such that
  \begin{equation*}
    \forall v,w\in\ycal_0: \quad \hat{a}_\mu(w,v) \;=\; \sum_{i=1}^{Q_a}\theta^a_i(\mu)\hat{a}_i(u,v),
    \quad f_\mu(v) \;=\; \sum_{i=1}^{Q_f}\theta^f_i(\mu)f_i(v).
  \end{equation*}
  \end{enumerate}
  Then, the Kolomogorov $N$-width $\mathrm{d}_N(\Mcal)$ decays exponentially, \ie
  \begin{equation*}
    \mathrm{d}_N(\Mcal) \leq \alpha\cdot \exp(-\beta N^{1/Q_a}).
  \end{equation*}
  For a parametrized righthand side $f_\mu$ we have $\alpha\in\mathcal{O}(Q_f)$ and
  $\beta\in\mathcal{O}(Q_f^{-1/Q_a})$.
\end{proposition}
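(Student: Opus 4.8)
The plan is to reduce the ultraweak problem to an equivalent, parameter-independent-looking problem on the fixed space $\ycal_0$, and then invoke the known exponential decay result for coercive parameter-separable problems from~\cite{OhlbergerRave}. The key observation is that the normal equation~\eqref{eq:approximability:generalProblem} is equivalent (by Corollary~\ref{cor:optimalCondition} and the saddlepoint reformulation in~\eqref{eq:generalSaddlepointProblem}) to the symmetric coercive problem $\hat{a}_\mu(w_\mu,v) = f_\mu(v)$ for all $v\in\ycal_\mu$, and the primal solution is recovered isometrically via $u_\mu = R_\xcal^{-1}A_\mu^*[w_\mu]$. Since $A_\mu^*:\ycal_\mu\to\xcal'$ is an isometry (Prop.~\ref{prop:genericIsometry}), we have $\norm{u_\mu - u_\mu^N}_\xcal = \norm{A_\mu^*[w_\mu - w_\mu^N]}_{\xcal'} = \norm{w_\mu - w_\mu^N}_{\ycal_\mu}$ for any $w_\mu^N\in\ycal_0$; hence $d_N(\Mcal)$ in the $\xcal$-norm is controlled by the Kolmogorov width of the dual solution manifold $\hat\Mcal := \{w_\mu \mid \mu\in\Pcal\}$ measured in the varying norms $\norm{\cdot}_{\ycal_\mu}$, and by the norm equivalence~\eqref{eq:approximability:normEquivalence} this is in turn (up to the constant $C/c$) controlled by the width of $\hat\Mcal$ in the single fixed norm $\norm{\cdot}_0$.

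Next I would set up the fixed-norm problem precisely. Equip $\ycal_0$ with $\norm{\cdot}_0$ and consider, for each $\mu$, the solution $w_\mu$ of $\hat{a}_\mu(w_\mu,v) = f_\mu(v)$ for all $v\in\ycal_0$ — density of $\ycal_0$ in $\ycal_\mu$ (assumption 1) plus the norm equivalence guarantees this restricted problem has the same solution $w_\mu$ and that $\hat a_\mu$ is coercive and bounded on $\ycal_0$ with $\mu$-uniform constants $c^2$ and $C^2$ (these come directly from squaring~\eqref{eq:approximability:normEquivalence} since $\hat a_\mu(v,v) = \norm{v}_{\ycal_\mu}^2$). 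Parameter-separability of $\hat a_\mu$ with $Q_a$ terms and of $f_\mu$ with $Q_f$ terms is assumption 3. This is now exactly the hypothesis of~\cite{OhlbergerRave}: a symmetric coercive parameter-separable problem on a fixed Hilbert space, for which the exponential bound $d_N(\hat\Mcal)_{\norm{\cdot}_0} \le \tilde\alpha\exp(-\tilde\beta N^{1/Q_a})$ holds, with the dependence of $\tilde\alpha$ on $Q_f$ and of $\tilde\beta$ on $Q_f^{-1/Q_a}$ as recorded there (the $Q_f$ scaling arises because the right-hand side contributes $Q_f$ "directions" that must each be resolved, effectively multiplying the polynomial-degree budget). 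Pulling this back through the isometry and the equivalence constants gives $d_N(\Mcal) \le (C/c)\,d_N(\hat\Mcal)_{\norm{\cdot}_0} \le \alpha\exp(-\beta N^{1/Q_a})$ with $\alpha = (C/c)\tilde\alpha \in \mathcal{O}(Q_f)$ and $\beta = \tilde\beta \in \mathcal{O}(Q_f^{-1/Q_a})$.

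The main obstacle is making the reduction to the fixed space rigorous despite the $\mu$-dependence of $\ycal_\mu$: one must check that an $N$-dimensional subspace $U_N\subset\ycal_0$ that approximates $\hat\Mcal$ well in $\norm{\cdot}_0$ also approximates it well in every $\norm{\cdot}_{\ycal_\mu}$ (immediate from~\eqref{eq:approximability:normEquivalence}), and, conversely, that restricting the variational problem from $\ycal_\mu$ to the dense subspace $\ycal_0$ does not change the solution — here one uses that $w_\mu\in\ycal_\mu$ and that $\hat a_\mu(w_\mu - \tilde w, v) = 0$ extends from $v\in\ycal_0$ to $v\in\ycal_\mu$ by continuity and density. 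A secondary technical point is ensuring the continuous dependence $\mu\mapsto w_\mu$ (needed so that $\hat\Mcal$ is compact and the width argument of~\cite{OhlbergerRave} applies), which follows from continuity of the $\theta_i$ and uniform coercivity via a standard Strang-type estimate. Everything else is bookkeeping of the equivalence constants and citing the coercive result verbatim.
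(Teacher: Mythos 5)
Your proposal is correct and follows essentially the same route as the paper: pass to the normal equation on the fixed space $\ycal_0$, use the norm equivalence to obtain $\mu$-uniform coercivity and continuity of $\hat{a}_\mu$, invoke the exponential $N$-width bound for symmetric coercive parameter-separable problems from the cited reference, and pull the estimate back to $\Mcal$ via the isometry $\norm{u_\mu-u_\mu^N}_\xcal=\norm{w_\mu-w_\mu^N}_{\ycal_\mu}$ and the equivalence constants. The only cosmetic difference is that the paper makes the $Q_f$-dependence explicit through the superposition bound $\mathrm{d}_{Q_fN}(\Mcal_a)\leq\sum_{i=1}^{Q_f}\mathrm{d}_N(\Mcal_i)$ before reducing to a parameter-independent right-hand side, whereas you delegate that bookkeeping to the cited result.
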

\begin{proof}
  \newcommand{\ybar}{\bar{\ycal}_0}
  Define the parameter-independent space $\ybar := \clos_{\norm{\cdot}_0}(\ycal_0)$
  and the normal equation
  \begin{equation*}
    \text{Find}\; w\in\ybar:\quad \hat{a}_\mu(w, v) = f_\mu(v)
    \qquad \forall v\in\ybar.
  \end{equation*}
  with solution manifold $\Mcal_a\subseteq\ybar$. Consider the subproblems
  \begin{equation*}
    \text{Find}\; w_i\in\ybar: \quad \hat{a}_\mu(w_i, v) = f_i(v)
    \qquad \forall v\in\ybar, \qquad i=1,...,Q_f.
  \end{equation*}
  with corresponding solution manifold $\Mcal_i\subseteq\ybar$.
  If we can bound $\mathrm{d}_N(\Mcal_i)$
  from above we obtain a (not necessarily sharp) upper bound on $\mathrm{d}_N(\Mcal_a)$ via
  \begin{equation*}
    \mathrm{d}_{(Q_f N)}(\Mcal_a) \leq \sum_{i=1}^{Q_f}\mathrm{d}_N(\Mcal_i).
  \end{equation*}
  Following, we thus assume $f$ to be parameter-independent. Consider the normal equation
  \begin{equation}\label{eq:approximability:normalEq}
    \text{Find}\; w\in\ybar:\quad \hat{a}_\mu(w, v) = f(v) \qquad \forall v\in\ybar.
  \end{equation}
  Due to~\eqref{eq:approximability:normEquivalence} the bilinear form $\hat{a}_\mu$ is coercive
  and continuous on $\bar{\ycal}_0$. Applying theory for symmetric and coercive
  problems~\cite{OhlbergerRave} yields that $\mathrm{d}_N(\Mcal_a)$ decays with the proposed rate.
  Using the norm equivalence~\eqref{eq:approximability:normEquivalence}
  and the denseness of $\bar{\ycal}_0$ in all $\ycal_\mu$ yields
  $\mathrm{d}_N(\Mcal) \leq \mathrm{d}_N(\Mcal_a)$ and thus the claim.
\end{proof}

\begin{remark}
  In the case of reactive transport we have seen in Cor.~\ref{prop:testSpaceIdentification}
  that every $\mu$-dependent test space $\ycal_\mu$
  is isomorphic to the parameter-independent Sobolev-space $\Honeb$. Moreover, the norm equivalence
  \begin{equation*}
    c(\mu)\norm{v}_{\Honeb} \;\leq\; \norm{v}_{\mu} \;\leq\; C(\mu)\norm{v}_{\Honeb}.
  \end{equation*}
  holds with parameter-dependent equivalence constants
  \begin{align*}
    c(\mu) &= (2 + C_p^2(2\norm{c_\mu}_{\Linfty}^2+1))^{-\tfrac{1}{2}} \;> 0,\\
    C(\mu) &= (\max \{2\norm{c}_{\Linfty}, T_{min}^{-1}(2T_{max}+1)\})^{\tfrac{1}{2}} \;< \infty.
  \end{align*}
  Due to the parameter-separability (which one directly verifies) these constants continuously depend on $\mu$ and can
  thus be uniformly bounded from below or above, respectively.
\end{remark}

\begin{remark}
  For a parametrized transport direction $\vec{b}_\mu$, the test spaces $\ycal_\mu$ are isomorphic
  to the spaces $H^1(\vec{b}_\mu, \Omega)$ which differ for space-dimension $d>1$. Thus, we can,
  as it is known in the community (see \eg\cite{BrunkenSmetanaUrban,OhlbergerRave}),
  no longer expect exponential convergence of $d_N(\Mcal)$.
\end{remark}

\section{Test space based model order reduction}\label{sec:modelOrderReduction}
As we have seen in the discretization of the non-parametric problem (Sec.~\ref{sec:discretization})
it is advisable to perform as many computations as possible in the test space. Following this idea,
we use a reduced formulation of the adjoint normal
equation~\eqref{eq:transport:explicitContinuousNormalEq}
(as proposed in~\cite{BrunkenSmetanaUrban} and also used in~\cite{HenningPalitta}). Let $\ycal^\delta$ be a finite dimensional discretization of $\Honeb$ with dimension $\text{dim}(\ycal^\delta) = n$ obtained \eg by a high-dimensional finite element discretization and $w^\delta\in\ycal^\delta$ the corresponding high-dimensional adjoint solution of~\eqref{eq:discreteNormalEq}. For a small linear subspace $Y^N\subseteq\ycal^\delta$, $\text{dim}(Y^N) = N \ll n$, a reduced solution $w_N \in Y^N$ is given by Galerkin-projection, \ie by solving the reduced normal equation
\begin{equation}\label{eq:reducedNormalEq}
  \text{Find}\; w_N\in Y^N: \qquad (A_\mu^*[w_N], A_\mu^*[v_N])_{\xcal'} \;=\; f_\mu(v_N) \qquad\forall v_N\in Y^N.
\end{equation}
Note, that the reduced problem is coercive with optimal coercivity constant $\alpha(\mu) = 1$ when equiping $Y^N$ with the optimal norm $\norm{\cdot}_\mu$. Equivalently, the reduced Petrov-Galerkin formulation\begin{equation}\label{eq:reducedEq}
  \text{Find}\; u_N\in X^N: \qquad (u_N, A_\mu^*[v_N])_{\xcal\times\xcal'} \;=\; f_\mu(v_N) \qquad\forall v_N\in Y^N.
\end{equation}
is optimally stable if we choose $X^N := R^{-1}_{\xcal} A^*[Y^N]$.

\subsection*{Offline-online decomposition}
As noted before, the operators $A_\mu^*$ and $f_\mu$ in the reactive transport problem
are parameter-separable, \ie
\begin{equation*}
A_\mu^*[w] = \sum_{q=1}^{Q_a} \theta_q^a(\mu)\,A_q^*[w] \quad\text{and}\quad f_\mu(v) = \sum_{q=1}^{Q_f} \theta_q^f(\mu)\,f_q(v)
\end{equation*}
with continuous coefficient functions $\theta_q^a,\, \theta_q^b$ and continuous linear functionals $A^*_q,\, f_q$. Let $\{w_i^N\}_{i=1}^N$ be a basis of $Y^N$. The following quantities can then be computed in the offline-phase, \ie independent of a concrete parameter:
\begin{align*}
Y_{i,j}^{p,q} &:= (A_p^*w_i^N,A_q^*w_j^N)_{\xcal'} & \text{for }p,q=1,...,Q_a \quad\text{and }\; i,j=1,...,N \, ,\\
f_i^q &:= f_q(w_i^N) & \text{for } q=1,...,Q_f \quad\text{and }\; i=1,...,N .
\end{align*}
For a given parameter $\mu\in\Pcal$, assembling the system matrix of~\eqref{eq:reducedNormalEq} then only requires the computations
\begin{equation*}
\underline{Y}_\mu^\mathrm{red} = \sum_{p=1}^{Q_a}\sum_{q=1}^{Q_a}\theta_p^a(\mu)\theta_q^a(\mu)Y^{p,q} , \qquad
\underline{f}_\mu^\mathrm{red} = \sum_{q=1}^{Q_f}\theta_q^f(\mu)f^q,
\end{equation*}
with computational complexity $\mathcal{O}(Q_a N^2)$ or $\mathcal{O}(Q_f N)$, respectively. The corresponding linear equation system $\underline{Y}^{red}_\mu \underline{w}_\mu = \underline{f}^{red}_\mu$ is dense and can be solved in $\mathcal{O}(N^3)$.

\begin{remark}[Condition of the reduced system]
As discussed in~\cite{renelt2023}, the condition number of the full-order system matrix depends
quadratically on the gridwidth $h$. Even with powerful general-purpose preconditioners such as an
algebraic multigrid (AMG), the solution of the FOM thus remains a challenge.
However, once a reduced basis $\{w_i\}_{i\in\N}$ is obtained, the reduced system matrix
$\underline{Y}_\mu^{red}$ does no longer suffer from this problem.
As the basis is by construction orthonormal \wrt to the inner product $(\cdot,\cdot)_{\Honeb}$
and the norm equivalence $\norm{\cdot}_{\Honeb} \sim \norm{\cdot}_{\mu}$ holds for all $\mu$,
it is reasonable to assume that the condition of $\underline{Y}_\mu^{red}$
(which is precisely the Gram-matrix \wrt $(\cdot,\cdot)_\mu$) depends strongly on
the equivalence constants and less on the gridwidth.
This was confirmed by our numerical experiments where we did not observe an increase in condition
after grid refinement (while keeping the basis size constant) with values well below $10^2$.
\end{remark}

\subsection{Basis generation}\label{ssec:basisGeneration}
We base the computation of a reduced space $Y^N$ on a set $\{w^\delta_{\mu_i}\} \subset \ycal^\delta$ of $n_{train}$ solutions to the high-dimensional normal equation~\eqref{eq:discreteNormalEq}.
Subsequently, we want to employ the classic (weak) Greedy algorithm to find the dominant modes in the given set. As we are later interested in a good approximation of the reconstructed reduced solution, it would be natural to perform the algorithm on the set of reconstructions
$\{R_{\xcal}^{-1}A_{\mu_i}^*[w_{\mu_i}^\delta]\}$.
Interestingly, the standard greedy algorithm on the reconstructions can be written in terms of the test space snapshots as the norm $\norm{u_{\mu}^\delta - u_\mu^N}_{\xcal} = \norm{w_\mu^\delta - w_\mu^N}_\mu$ is computable (similar to~\cite[Algorithm~4.1]{BrunkenSmetanaUrban}). However, the orthonormalization of the generated basis then requires the evaluation of operators $A^{-*}_{\mu_1}A^*_{\mu_2}: \ycal^\delta \rightarrow\ycal^\delta$
which is only possible with a discretization of $\xcal$ which we want to avoid
\footnote{These operators appear when computing representations $w_\mu^\delta$ of differences $A_{\mu_i}^*[w^\delta_{\mu_i}] - \alpha A_{\mu_j}^*[w^\delta_{\mu_j}]$ \ie computing $w^\delta_\mu = A^{-*}_{\mu}(A_{\mu_i}^*[w^\delta_{\mu_i}] - \alpha A_{\mu_j}^*[w^\delta_{\mu_j}])$ for some $\mu$.}.
Thus, we propose to directly perform a greedy algorithm on the snapshots $\{w^\delta_{\mu_i}\}$
to construct a reduced basis of $\ycal^\delta$ (Alg.~\ref{alg:greedy}).
Since the snapshots originate from spaces equipped with different, parameter-dependent norms,
there is no canonical choice for the norm $\norm{\cdot}_*$ used in Alg.~\ref{alg:greedy}.
In our numerical experiments in Section~\ref{sec:numerics} we will exclusively consider the norm
$\norm{\cdot}_{\Honeb}$ on the parameter-independent space $\Honeb$.
However, depending on the problem at hand other choices such as the norm $\norm{\cdot}_{\mu^*}$
for a suitable fixed reference parameter $\mu^*$ might be viable as well.

\begin{algorithm}
\begin{algorithmic}
  \Require{training set $S_\mathrm{train} \subseteq\Pcal$, tolerance $\varepsilon$}
  \ForAll{$\mu\in S_\mathrm{train}$}
    \State Compute $w_\mu^\delta$ using~\eqref{eq:discreteNormalEq}
  \EndFor
  \Repeat
    \ForAll{$\mu\in S_\mathrm{train}$}
      \State Compute $w_\mu^N$ using~\eqref{eq:reducedNormalEq}
    \EndFor
    \State $\mu^* \leftarrow \text{argmax}_{\mu\in S_\mathrm{train}} \norm{w_\mu^\delta - w_\mu^N}_*$
    \State $S^{N+1} \leftarrow S^N \cup \{\mu^*\}$
    \State $Y^{N+1} \leftarrow \text{orthonormalize}(\{w_\mu^\delta,\; \mu\in S^{N+1}\})$
    \State $N \leftarrow N+1$
  \Until{$\max_{\mu\in S_\mathrm{train}}\norm{w_\mu^\delta - w_\mu^N}_* \leq \varepsilon$}
  \State \Return $Y^N$
\end{algorithmic}
\caption{Test space greedy algorithm}
\label{alg:greedy}
\end{algorithm}

\subsection{Error estimator}
In Algorithm~\ref{alg:greedy} the full solution $w_\mu^\delta$ needs to be computed for every parameter in the training set in order to determine the approximation error in the greedy loop. Typically, one wants to replace the true error by an effective and reliable error estimator. Choosing the $\Honeb$-norm on the test space we are in the well-understood coercive setting (see \eg\cite{haasdonkMORtutorial}) and an effective and reliable error estimator is given by the dual-norm of the residual, \ie
\begin{equation}\label{eq:errorEstimator}
  \Delta_N(\mu) \;:=\; \frac{\norm{r_\mu}_{\Honeb'}}{\sqrt{\alpha_{LB}(\mu)}},
\end{equation}
where the residual is defined as $r_\mu := f_\mu - a_\mu(w_\mu^N, \cdot) \in (\ycal^\delta)'$ and $\alpha_{LB}(\mu)$ denotes a lower bound on the coercivity constant of $\hat{a}_\mu(\cdot,\cdot)$. In our case, the true coercivity constant $\alpha(\mu)$ can be bounded from below by
\begin{equation*}
  \alpha(\mu) = \inf_{v\in\Honeb}\frac{\hat{a}_\mu(v,v)}{\norm{v}_{\Honeb}^2}
  = \inf_{v\in\Honeb}\frac{\norm{v}_\mu^2}{\norm{v}_{\Honeb}^2}
   \geq c(\mu)^2,
\end{equation*}
\ie the (squared) equivalence constant from Lemma~\ref{lemma:normEquivalence} which, as a reminder, is given by
\begin{equation*}
    c(\mu) = (2 + C_p^2(2\norm{c_\mu}_{\Linfty}^2+1))^{-\tfrac{1}{2}}.
\end{equation*}

\section{Numerical experiments}\label{sec:numerics}

In this section we present numerical experiments supporting the theoretical findings.
We first concentrate on the nonparametric problem and investigate the quality of the chosen
discretization scheme. Afterwards, we consider the parametrized problem and give results on the
proposed model order reduction approaches.\\
All of the high-fidelity computations were implemented in DUNE~\cite{bastian2021dune}
using the DUNE-PDELab discretization toolbox~\cite{bastian2010pdelab}.
For the implementation and evaluation of the parametrized model the
pyMOR-library~\cite{MilkRaveSchindler} was used.
The code for all numerical experiments together with instructions on how to reproduce the results
is publicly available at~\cite{reneltReactionAdvectionCode}.

We investigate the transport of a pollutant inside a catalytic filter.
To that end let the computational domain be defined as $\Omega := [0,1]^2$.
The chemical enters the filter at an inflow boundary $\Gin \subseteq\partial\Omega$ and needs to
pass the so-called washcoat $\Omega_w$ containing the catalyst. The reactive process therefore
takes place in this subregion. The transporting medium and the remaining pollutant then exit the
domain at $\Gout\subseteq\partial\Omega$. We assume that the washcoat material is coated by
an additional protecting layer $\Omega_c$. For the transporting velocity field we consider
two different flux models:

\subsubsection*{Flux model 1: Poiseuille flow}
In a first testcase we assume the flow to be laminar along the negative $y$-direction.
The velocity profile is then given by the explicit solution of the Hagen-Poiseuille-equation, \ie
\begin{equation}\label{eq:poiseuilleProfile}
  \vec{b}_0(r) \sim \frac{R^2 - r^2}{4\eta}, \quad \eta > 0,
\end{equation}
only depending on the distance to the central axis $\{x=\tfrac{1}{2}\}$,
\ie~$r(x,y) = |x-\tfrac{1}{2}|$.
We obtain the globally defined velocity field as
\begin{equation}\label{eq:poiseuilleFlow}
  \vec{b}(x,y) := (0, -\vec{b}_0(r(x,y)))^T.
  \tag{F1}\addtocounter{equation}{1}
\end{equation}
See Table~\ref{tab:paramsPoiseuille} and Fig.~\ref{fig:poiseuilleFigures} for a summary of the chosen parameters.
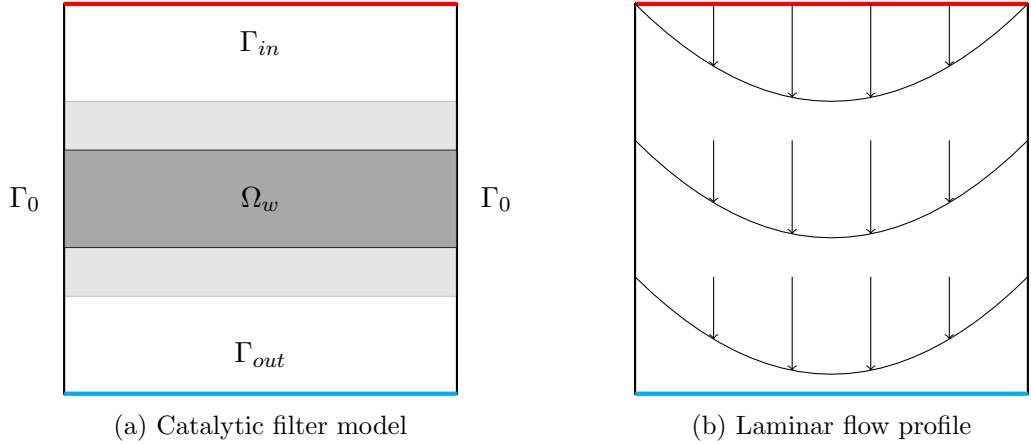
\begin{figure}[htb]
  \centering
  \begin{subfigure}{0.4\textwidth}
    \centering
    \begin{tikzpicture}[scale=0.7\textwidth/1cm]
      \draw[black,thick] (0.0,0.0) rectangle (1.0,1.0);
      \draw[red,ultra thick] (0.0, 1.0) -- (1.0, 1.0);
      \node[black] at (0.5, 0.9) {$\Gin$};
      \draw[cyan,ultra thick] (0.0, 0.0) -- (1.0, 0.0);
      \node[black] at (0.5, 0.1) {$\Gout$};
      \node[black] at (-0.1, 0.5) {$\Gamma_0$};
      \node[black] at (1.1, 0.5) {$\Gamma_0$};
      \filldraw[fill=black!40!white, draw=black, opacity=0.25] (0.0, 1./4) rectangle (1.0, 3./4);
      \filldraw[fill=black!40!white, draw=black, opacity=0.8] (0.0, 3./8) rectangle (1.0, 5./8);
      \node[black] at (0.5,0.5) {$\Omega_w$};
    \end{tikzpicture}
    \caption{Catalytic filter model}
    \label{fig:filterModelPoiseuille}
  \end{subfigure}
  \begin{subfigure}{0.4\textwidth}
    \centering
    \begin{tikzpicture}[scale=0.7\textwidth/1cm]
      \draw[black,thick] (0.0,0.0) rectangle (1.0,1.0);
      \draw[red,ultra thick] (0.0, 1.0) -- (1.0, 1.0);
      \draw[cyan,ultra thick] (0.0, 0.0) -- (1.0, 0.0);
      \foreach \y in {1, 0.65, 0.3} {
        \draw[color=black, domain=0:1] plot (\x,  {\y - (0.25 - (\x-0.5)^2)/(4*0.25)});
        \foreach \i in {1,2,3,4} {
          \draw[->] ({\i / 5}, \y) -- ({\i / 5}, {\y - (0.25 - (\i/5 - 0.5)^2)/(4*0.25)});
        }
      }
    \end{tikzpicture}
    \caption{Laminar flow profile}
    \label{fig:laminarProfile}
  \end{subfigure}
  \caption{\label{fig:poiseuilleFigures} Geometric setup and flow profile for the Poiseuille model~\eqref{eq:poiseuilleFlow}}
\end{figure}
\begin{table}[htb]
  \centering
  \renewcommand{\arraystretch}{1.2}
  \begin{tabular}{|c|c|c|c|c|c|c|}
    \hline
    $\Omega_w$ & $\Omega_c$ & $\Gin$ & $\Gout$ & $R$ & $\eta$\\
    \hline
    $[0,1]\times [\tfrac{3}{8},\tfrac{5}{8}]$ & $([0,1]\times [\tfrac{1}{4},\tfrac{3}{4}]) \setminus \Omega_w$ & $[0,1]\times\{1\}$ & $[0,1]\times\{0\}$ & $0.5$ & $0.2$ \\
    \hline
  \end{tabular}
  \caption{Parameters for the Poiseuille-flow problem~\eqref{eq:poiseuilleFlow}}
  \label{tab:paramsPoiseuille}
\end{table}
\subsubsection*{Flux model 2: Darcy flow}
In a slightly more realistic approach we allow for more general in-/outflow boundaries.
In this case we may treat the catalytic filter as a porous medium, also allowing for different
premeabilities in the different compartments.
Viscious flow can then be modeled by Darcys law which states that the flux is proportional to the
gradient of the pressure $p$ of the fluid. Together with the continuity equation
(\ie $\vec{b}$ being divergence-free) and a no-flux-condition on the characteristic boundary
one obtains the full Darcy-model
\begin{equation}\label{eq:darcyFlow}
  \begin{cases}
    \phantom{\nabla\cdot{}} \vec{b} = -k\nabla p &\text{in}\;\Omega \\
    \nabla\cdot\vec{b} = 0 &\text{in}\;\Omega \\
    \phantom{\nabla\cdot{}} p = 1 &\text{on}\; \Gin, \\
    \phantom{\nabla\cdot{}} p = 0 &\text{on}\; \Gout, \\
    \phantom{\nabla\cdot{}} \vec{b} = 0 &\text{on}\; \Gamma_0 \;:= \Gamma \setminus (\Gin \cup \Gout).
  \end{cases}
  \tag{F2}\addtocounter{equation}{1}
\end{equation}
Here, $k \in \Linfty$ denotes the permeability coefficient which differs in the compartments
and is thus taken as a sum of indicator functions
\begin{equation*}
  k := \mathbbm{1}_{\Omega_{reac}} + k_w\mathbbm{1}_{\Omega_w} + k_c\mathbbm{1}_{\Omega_c}.
\end{equation*}
where the washcoat and its coating are less permeable \ie $k_c \leq k_w < 1$.\\
All chosen parameters are summarized in Table~\ref{tab:paramsDarcy}, also see Fig.~\ref{fig:darcyFigures}.

\begin{table}[htb]
  \centering
  \renewcommand{\arraystretch}{1.2}
  \begin{tabular}{|c|c|c|c|c|c|}
    \hline
    $\Omega_w$ & $\Omega_c$ & $\Gin$ & $\Gout$ & $k_w$ & $k_c$\\
    \hline
    $[0,1]\times [\tfrac{3}{8},\tfrac{5}{8}]$ & $([0,1]\times [\tfrac{1}{4},\tfrac{3}{4}]) \setminus \Omega_w$ & $\{0\}\times (\tfrac{3}{4},1)$ & $\{1\} \times (0,\tfrac{1}{4})$ & $0.2$ & $0.05$ \\
    \hline
  \end{tabular}
  \caption{\label{tab:paramsDarcy} Parameters for the Darcy-flow problem~\eqref{eq:darcyFlow}}
\end{table}

\begin{figure}[htb]
  \centering
  \begin{subfigure}{0.4\textwidth}
    \centering
    \begin{tikzpicture}[scale=0.7\textwidth/1cm]
      \draw[black,thick] (0.0,0.0) rectangle (1.0,1.0);
      \draw[red,ultra thick] (0.0, 1.0) -- (0.0, 3./4);
      \node[black] at (0.1, 7/8) {$\Gin$};
      \draw[cyan,ultra thick] (1.0, 0.0) -- (1.0, 1./4);
      \node[black] at (0.85, 1/8) {$\Gout$};
      \node[black] at (0.5, 0.07) {$\Gamma_0$};
      \node[black] at (0.
      5, 0.9) {$\Gamma_0$};
      \filldraw[fill=black!40!white, draw=black, opacity=0.25] (0.0, 1./4) rectangle (1.0, 3./4);
      \filldraw[fill=black!40!white, draw=black, opacity=0.8] (0.0, 3./8) rectangle (1.0, 5./8);
      \node[black] at (0.5,0.5) {$\Omega_w$};
    \end{tikzpicture}
    \caption{\label{fig:filterModel} Catalytic filter model}
  \end{subfigure}
  \hspace{0.1\linewidth}
  \begin{subfigure}{0.4\linewidth}
    \centering
    \includegraphics[height=0.7\linewidth]{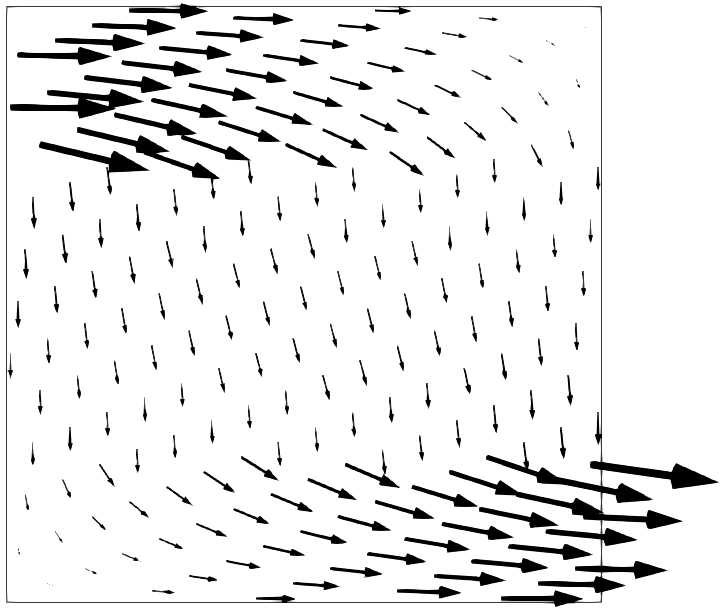}
    \caption{\label{fig:darcyStreamlines} {Darcy flow field $\vec{b}$ \hspace*{8mm}} }
  \end{subfigure}
  \caption{\label{fig:darcyFigures} Geometric setup and flow field for the Darcy model~\eqref{eq:darcyFlow}}
\end{figure}

\newpage
\subsubsection*{Specification of the non-parametric advection-reaction problem}
Having now determined the flow field we specify the remaining data functions for
Problem~\eqref{eq:transport:explicitContinuousNormalEq}.
For the boundary values we use a parametrization $\Gin = \phi([0,1])$ by an isomorphism $\phi$ and set
$g_D := \phi \circ \hat{g}_D$ for some inflow concentration $\hat{g}_D: [0,1]\rightarrow \R$.
Similar to the permeability, the reaction coefficient is assumed to be piecewise constant in the
compartments $\Omega_w$ and $\Omega_c$ and zero elsewhere and thus defined as $c := c_w \cdot\mathbbm{1}_{\Omega_w} + c_c \cdot\mathbbm{1}_{\Omega_c}$.
In this example we do not consider additional sources or sinks.
All parameter values can be found in Table~\ref{tab:params}.
\begin{table}[htb]
  \centering
  \renewcommand{\arraystretch}{1.2}
  \renewcommand{\therowcntr}{T.\arabic{rowcntr}}
  \begin{tabular}{|N|c|c|c|c|c|}
    \hline
     \multicolumn{1}{|c|}{Testcase} & $\vec{b}$ & $\hat{g}_D(s)$ & $f_\circ(x)$ & $c_w$ & $c_c$ \\
    \hline
    \label{params:testcase1} & \eqref{eq:poiseuilleFlow}
    & $\sin(4\pi s)^2$ & \multirow{3}{*}{$\equiv 0$} &
    \multirow{3}{*}{$\equiv 0.5$}  & \multirow{3}{*}{$\equiv 0.1$} \\
    \label{params:testcase2} & \eqref{eq:poiseuilleFlow} &
    $\mathbbm{1}_{[0.25,0.75]}(s)$ & & & \\
    \label{params:testcase3} & \eqref{eq:darcyFlow} &
    $\sin(\pi s)^2$ & & & \\
    \hline
  \end{tabular}
  \caption{Chosen parameters for the advection-reaction problem}
  \label{tab:params}
\end{table}

\subsubsection*{Discretization}
For simplicity reasons we use a structured quadrilateral mesh $\mathcal{T}_h$ with fixed
gridwidth $h$ (although this is not required for the method).
We then discretize the optimal test space $\ycal$ with the Lagrange finite element
space $\mathbb{Q}^k(\mathcal{T}_h)$ consisting of the globally continuous and piecewise polynomial
(of at most order $k$ in each variable) functions.
Since $\mathbb{Q}^k(\mathcal{T}_h)\hookrightarrow H^1(\Omega)\hookrightarrow\ycal$,
this is a conforming discretization,
even in the special case $T_{min}=0$.
We compute solutions for gridwidths $h_r := 2^{-(r+3)},\, r=0,..,r_{max}$ such that
$h_0\!=\!\tfrac{1}{8}$ resolves the geometry.
For the error computation a solution obtained by an $(k\!+\!1)$-th order DG-scheme with upwind flux computed
on the refinement level $r_{max}\!+\!1$ is used.
Similarly, the Darcy-velocity field in Testcase~\ref{params:testcase3} is obtained by an SIPG scheme on the finest
refinement level in order to avoid the introduction of any additional error contributions.
\par
For the analytically given Poiseuille-field and $C^\infty$-boundary data we observe an
quasi-optimal order of almost $k+1$ for both bilinear and biquadratic finite elements.
In the case of discontinuous inflow data the error still convergences, although the rate drops
significantly which is to be expected (Fig.~\ref{fig:hconvergence:poiseuille}).
When switching to the flow field obtained by solving
the Darcy-equation, we get suboptimal rates of approximately $k$ (Fig.~\ref{fig:hconvergence:darcy}),
which coincides with the
lower bound on the rate as discussed in Remark~\ref{rem:transport:convergenceRates}.

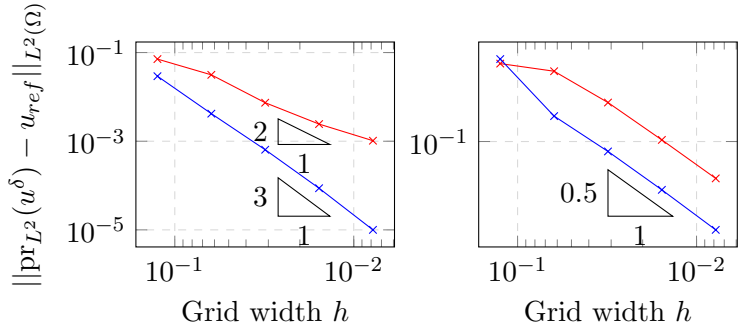
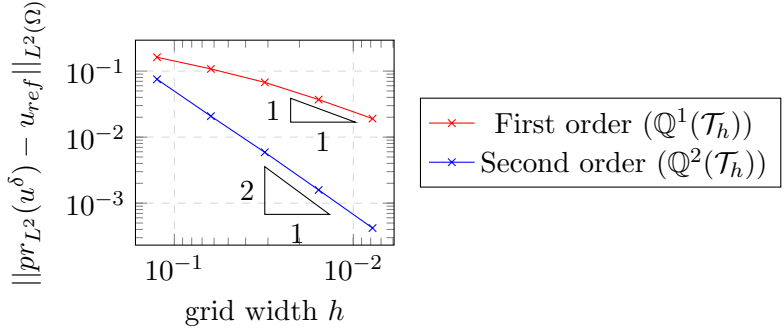
\begin{figure}[ht]
  \centering
  \begin{subfigure}{0.6\linewidth}
    \begin{tikzpicture}
      \begin{loglogaxis}[
          name=ax1,
          width=0.45\linewidth, 
          grid=major, 
          grid style={dashed,gray!30}, 
          xlabel=Grid width $h$,
          x dir=reverse,
          ylabel= $\norm{\mathrm{pr}_{L^2}(u^\delta) - u_{ref}}_{\Ltwo}$,
        ]
        \addplot[color=red, mark=x, forget plot]
        table[x=gridwidth,y=l2error,col sep=comma]
        {data/convergenceTests/h-convergence_fo_poiseuille_ref4.csv};
        \logLogSlopeTriangle{0.55}{0.2}{0.5}{2}{black}
        \addplot[color=blue, mark=x, forget plot]
        table[x=gridwidth,y=l2error,col sep=comma]
        {data/convergenceTests/h-convergence_so_poiseuille_ref4.csv};
        \logLogSlopeTriangle{0.55}{0.2}{0.15}{3}{black}
      \end{loglogaxis}
      \begin{loglogaxis}[
          at={(ax1.south east)},
          xshift = 0.1\linewidth,
          width=0.45\linewidth, 
          grid=major, 
          grid style={dashed,gray!30}, 
          xlabel=Grid width $h$,
          x dir=reverse,
        ]
        \addplot[color=red, mark=x]
        table[x=gridwidth,y=l2error,col sep=comma]
        {data/convergenceTests/h-convergence_fo_poiseuille_ref4_l2data.csv};
        \addplot[color=blue, mark=x]
        table[x=gridwidth,y=l2error,col sep=comma]
        {data/convergenceTests/h-convergence_so_poiseuille_ref4_l2data.csv};
        \logLogSlopeTriangle{0.5}{0.25}{0.15}{0.5}{black}
      \end{loglogaxis}
    \end{tikzpicture}
    \caption{\label{fig:hconvergence:poiseuille} Convergence rates for the laminar flux model. For smooth inflow data (left, Testcase~\ref{params:testcase1}) one obtains almost optimal convergence orders. Discontinuous inflow data (right, Testcase~\ref{params:testcase2}) limits the rate but still converges.}
  \end{subfigure}
  \begin{subfigure}{0.6\linewidth}
  \centering
    \begin{tikzpicture}
      \begin{loglogaxis}[
          width=0.45\linewidth, 
          grid=major, 
          grid style={dashed,gray!30}, 
          xlabel=Global basis size, 
          xlabel=grid width $h$,
          x dir=reverse,
          ylabel= $\norm{pr_{L^2}(u^\delta) - u_{ref}}_{\Ltwo}$,
          legend style={at={(1.1,0.5)},anchor=west},
        ]
        \addplot[color=red, mark=x]
        table[x=gridwidth,y=l2error,col sep=comma]
        {data/convergenceTests/h-convergence_fo_darcy_ref4.csv};
        \addlegendentry{First order ($\mathbb{Q}^1(\mathcal{T}_h)$)}
        \logLogSlopeTriangle{0.6}{0.25}{0.6}{1}{black}
        \addplot[color=blue, mark=x]
        table[x=gridwidth,y=l2error,col sep=comma]
        {data/convergenceTests/h-convergence_so_darcy_ref4.csv};
        \addlegendentry{Second order ($\mathbb{Q}^2(\mathcal{T}_h)$)}
        \logLogSlopeTriangle{0.5}{0.25}{0.15}{2}{black}
      \end{loglogaxis}
    \end{tikzpicture}
    \caption{\label{fig:hconvergence:darcy} Convergence rates for reactive transport goverened by
    Darcy-flux (Testcase~\ref{params:testcase3}).
    For the computation of the advection field a high-order scheme on a fine mesh was used in order
    to prevent additional error contributions.}
  \end{subfigure}
  \caption{Convergence of the $L^2(\Omega)$-error under $h$-refinement.
  Top: Poiseuille model~\eqref{eq:poiseuilleFlow}, Bottom: Darcy model~\eqref{eq:darcyFlow}}
\end{figure}

\FloatBarrier

\subsection{The parametrized problem}\label{ssec:numerics:parametrized}
We now exclusively consider Testcase~\ref{params:testcase3} parametrized by the
reaction coefficients $c_w$ and $c_c$ and the magnitude $g_0$ of the inflow profile, i.e.\
\begin{equation*}
  \hat{g}_{D,\mu}: [0,1] \rightarrow \R, \qquad \hat{g}_{D,\mu}(s) := g_0\sin(\pi s)^2.
\end{equation*}
The parameter $\mu$ therefore consists of the three components $\mu = (c_w, c_c, g_0)$ for which we
specify three different (compact) parameter domains $\Pcal\subset\R^3$ (Tab.~\ref{tab:paramsParametrizedProblem}).
In a first testcase we consider a two-compartment model without reaction in the coating layer and fixed $g_0$. In Testcase~\ref{params:pTestcase2} we allow $c_c$ to vary but still enforce the
constraint $c_c \leq c_w$.
By additionally varying the inflow magnitude $g_0$ we obtain our last example~\ref{params:pTestcase3}.
In Fig.~\ref{fig:sampleSolutions} solutions for different parameter combinations are shown.

\begin{figure}[ht]
  \centering
  \begin{subfigure}{0.2\linewidth}
    \centering
    \includegraphics[width=0.9\linewidth]{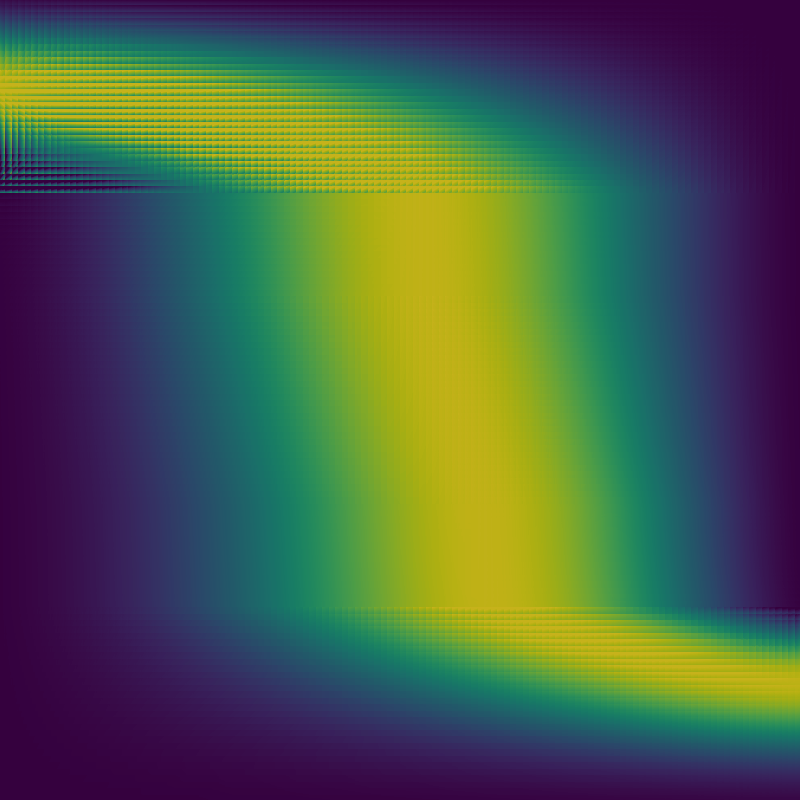}
    \caption{$\mu=(0,0,1)$}
  \end{subfigure}
  \begin{subfigure}{0.2\linewidth}
    \centering
    \includegraphics[width=0.9\linewidth]{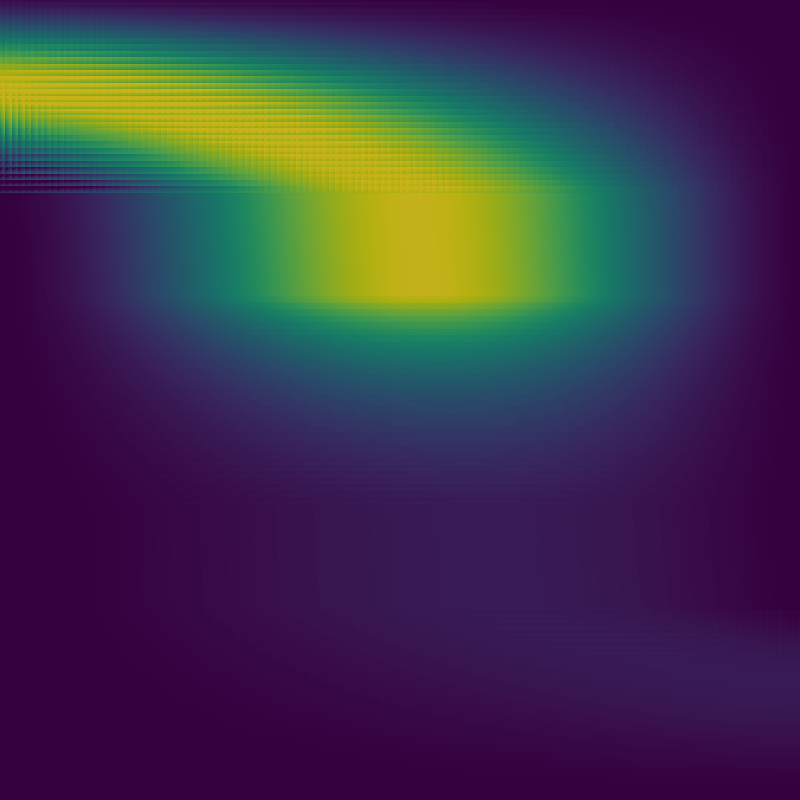}
    \caption{$\mu=(1,0,1)$}
  \end{subfigure}
  \begin{subfigure}{0.2\linewidth}
    \centering
    \includegraphics[width=0.9\linewidth]{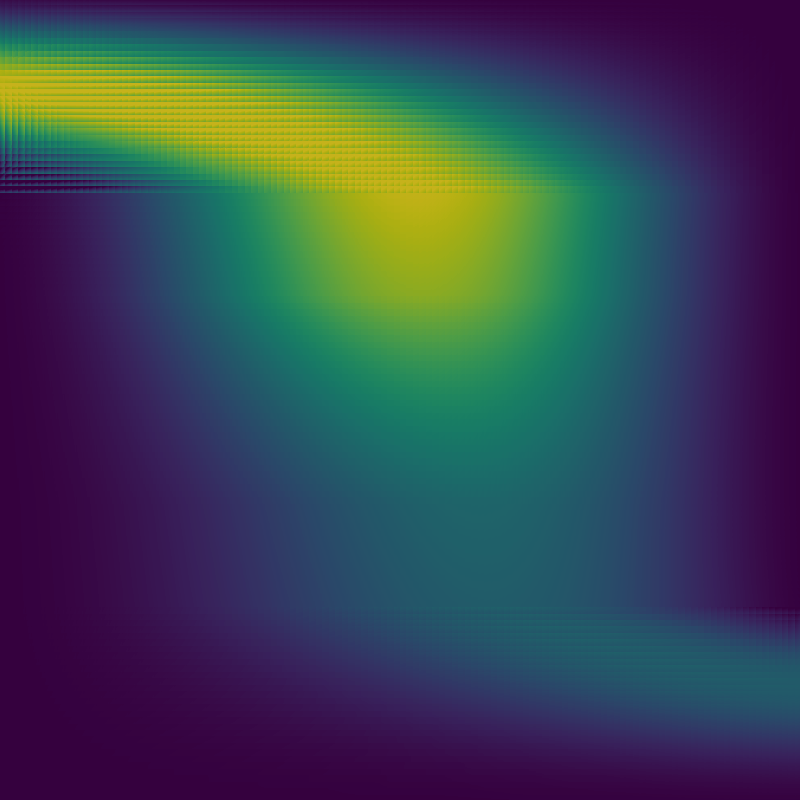}
  \caption{$\mu=(0.3,0.1,1)$}
  \end{subfigure}
  \hspace{0.02\linewidth}
  \begin{subfigure}{0.1\linewidth}
  \begin{tikzpicture}
    \pgfplotscolorbardrawstandalone[
      colormap/viridis,
      point meta min = 0.0,
      point meta max = 1.0,
      colorbar style = {height=0.2*0.9/0.1*\linewidth, width=0.3\linewidth}
      ]
  \end{tikzpicture}
  \\\phantom{(d)}
  \end{subfigure}
  \caption{\label{fig:sampleSolutions} Solutions $u_\mu$ to the parametrized problem~\eqref{eq:parametrizedStrongProblem} for different values of the parameter $\mu =(c_w,c_c,g_0)$. }
\end{figure}

\begin{table}[htb]
  \centering
  \renewcommand{\arraystretch}{1.2}
  \renewcommand{\therowcntr}{P.\arabic{rowcntr}}
  \begin{tabular}{|N|c|c|c|c|c|}
    \hline
    \multicolumn{1}{|c|}{Testcase} & Parameter domain $\Pcal$ & $n_{train}$ \\
    \hline
    \label{params:pTestcase1} & $[0.0, 1.0] \times \{0\} \times \{1\}$ & $500$ \\
    \label{params:pTestcase2} & $\{ 0.0 \,\leq\, c_c \,\leq\, c_w \,\leq\, 1.0\} \times \{1\}$ & $630$\\
    \label{params:pTestcase3} & $\{ 0.0 \,\leq\, c_c \,\leq\, c_w \,\leq\, 1.0\} \times [1,10]$  & $6300$ \\
    \hline
  \end{tabular}
  \caption{Specification of the parameter domains}
  \label{tab:paramsParametrizedProblem}
\end{table}

\subsubsection*{Investigation of the approximation error decay}
For all testcases we generate reduced spaces using a weak greedy algorithm with the error estimator~\eqref{eq:errorEstimator}. We investigate both the approximation error (Fig.~\ref{fig:validationError}) - which we define as the
$L^2(\Omega)$-error $\Lnorm{\operatorname{pr}_{L^2}(u_\mu^\delta- u_\mu^N)}$ between the FOM-solution
$u_\mu^\delta$ and the ROM-solution $u_\mu^N$ - as well as the runtime of the reduced model
(Fig.~\ref{fig:runtimeEvaluation}).
\par
In all three testcases we see the expected exponential decay of the approximation error. For the first
testcase with one varying parameter we observe a rate of $\mathcal{O}(\exp(-\beta N))$
with an exponent $\beta\approx 1.75$ while in the second testcase we have $\beta\approx 0.7$.
If we additionally include the
magnitude of the inflow profile in the parametrization, the rate is not affected. This is to be expected
as this parameter only occurs in the right-hand side and only requires one additional reduced basis
function. However, the absolute error increases by approximately one magnitude, simply due to the fact
that by rescaling the solution we also scale the error by the same factor.
\par
Compared to the full order model (which takes approximately $0.5s$ to solve) the reduced model
is approximately $10^3$ times faster while achieving low approximation errors even for very small
reduced basis sizes.
\begin{figure}[ht]
  \centering
  \begin{subfigure}{\linewidth}
    \begin{tikzpicture}
      \begin{semilogyaxis}[
          boxplot/draw direction = y,
          boxplot = {
                  draw position = {0.25 + round(\plotnumofactualtype/3 - 0.49)
                                        + 1/4*mod(\plotnumofactualtype, 3)},
                  box extend = 1/4},
          width=0.9\linewidth,
          height=0.25\linewidth,
          xmajorgrids=true,
          grid style={gray!30, dashed},
          axis x line = bottom,
          axis y line = left,
          xmin=0.0, xmax=15, ymin=1e-14,ymax=1e1,
          xlabel= Reduced basis size $N$,
          xtick = {0.5,1.5,...,13.5},
          xticklabels = {1,...,14},
          ytick={1e-1,1e-4,1e-7,1e-10,1e-13},
          restrict y to domain=1e-10:1e-1,
          ylabel= {Approximation error},
          legend style={at={(0.95,0.6)},anchor=west},
          area legend,
          legend entries={\ref{params:pTestcase1},
          \ref{params:pTestcase2},
          \ref{params:pTestcase3}}
        ]
        \foreach \n in {1,...,14} {
          \addplot[color=black,fill=Goldenrod,boxplot, /pgfplots/boxplot/hide outliers]
          table[y={error_dim_\n}, fill,col sep=comma]
          {data/weak_greedy/rb_evaluation_ntrain_500_P1_H1b_ntest_500.csv};
          \addplot[color=black,fill=Magenta,boxplot, /pgfplots/boxplot/hide outliers]
          table[y={error_dim_\n}, fill,col sep=comma]
          {data/weak_greedy/rb_evaluation_ntrain_35_P2_H1b_ntest_500.csv};
          \addplot[color=black,fill=Cyan,boxplot, /pgfplots/boxplot/hide outliers]
          table[y={error_dim_\n}, fill,col sep=comma]
          {data/weak_greedy/rb_evaluation_ntrain_35_P3_H1b_ntest_500.csv};
        }
      \end{semilogyaxis}
    \end{tikzpicture}
    \caption{\label{fig:validationError} Approximation error of the reduced solution compared to a high-fidelity SIPG-solution computed on an additionally refined grid.}
    \vfill
  \end{subfigure}
  \begin{subfigure}{\linewidth}
    \begin{tikzpicture}
      \begin{axis}[
          boxplot/draw direction = y,
          boxplot = {
                  draw position = {0.25 + round(\plotnumofactualtype/3 - 0.49)
                                        + 1/4*mod(\plotnumofactualtype, 3)},
                  box extend = 1/4},
          width=0.9\linewidth,
          height=0.25\linewidth,
          xmajorgrids=true,
          grid style={gray!30, dashed},
          axis x line = bottom,
          axis y line = left,
          xmin=0.0, xmax=15, ymin=3.6e-4,ymax=5e-4,
          xlabel= Reduced basis size $N$,
          xtick = {0.5,1.5,...,13.5},
          xticklabels = {1,...,14},
          ytick={3.6e-4, 4.0e-4, 4.4e-4, 4.8e-4},
          restrict y to domain=1e-4:1e-3,
          ylabel= {Solving time [s]},
          legend style={at={(.95,0.6)},anchor=west},
          area legend,
          legend entries={\ref{params:pTestcase1},
          \ref{params:pTestcase2},
          \ref{params:pTestcase3}},
        ]
        \foreach \n in {1,...,14} {
          \addplot[color=black,fill=Goldenrod,boxplot, /pgfplots/boxplot/hide outliers]
          table[y={rom_time_dim_\n}, fill,col sep=comma]
          {data/weak_greedy/rb_evaluation_ntrain_500_P1_H1b_ntest_500.csv};
          \addplot[color=black,fill=Magenta,boxplot, /pgfplots/boxplot/hide outliers]
          table[y={rom_time_dim_\n}, fill,col sep=comma]
          {data/weak_greedy/rb_evaluation_ntrain_35_P2_H1b_ntest_500.csv};
          \addplot[color=black,fill=Cyan,boxplot, /pgfplots/boxplot/hide outliers]
          table[y={rom_time_dim_\n}, fill,col sep=comma]
          {data/weak_greedy/rb_evaluation_ntrain_35_P3_H1b_ntest_500.csv};
        }
      \end{axis}
    \end{tikzpicture}
    \caption{\label{fig:runtimeEvaluation}
    Runtime for solving the reduced system on an Intel~Xeon~Gold 6254 @$3.1$ Ghz.
    Solving the full model takes around $0.5$s (speedup: $\approx 10^3$).}
    \vfill
  \end{subfigure}
  \caption{Evaluation of the reduced models on a test set of $n_{test}=500$ additional
  randomly chosen parameters. The reduced model was obtained using the $\Honeb$-norm.}
\end{figure}
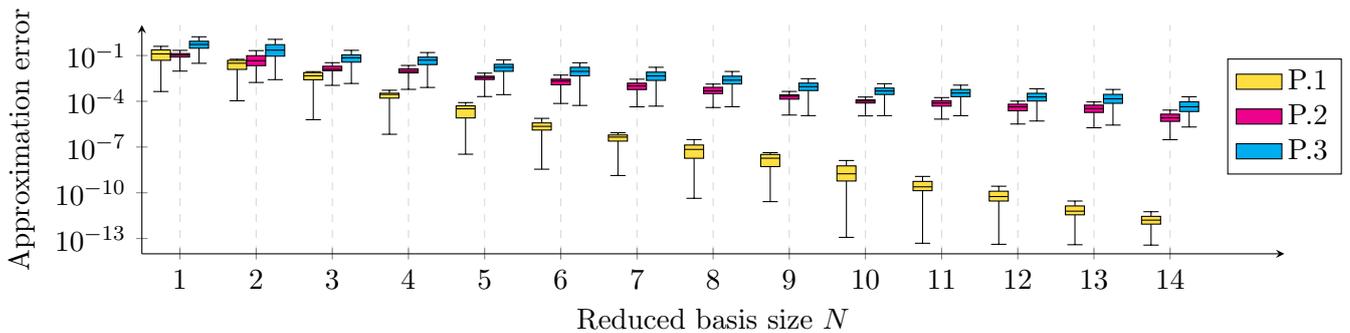
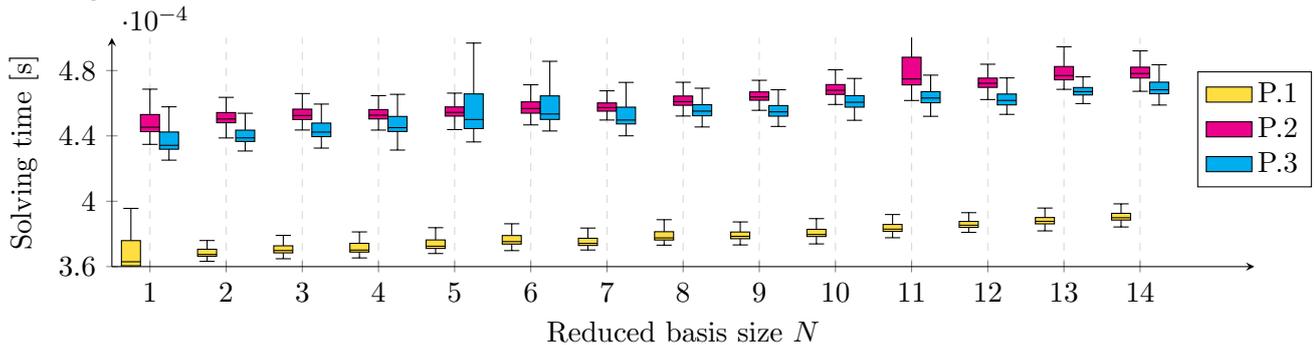

\FloatBarrier

\section{Conclusion}\label{sec:conclusion}
In this article we presented a model order reduction approach for reactive transport based on an ultraweak variational formulation. Choosing the optimal test space containing all supremizers and an operator-, and thus parameter-dependent norm leads to an optimally-stable scheme which can be reformulated as a normal equation on the space of test functions. We were able to prove that equivalence of the parameter-dependent norm on the test functions to a parameter-independent norm implies the exponential approximability of the solution set. This directly translates to the set of reconstructed primal solutions as well.

Moreover, we proposed to use a greedy algorithm to generate reduced approximation spaces for the test space and solve for a reduced solution of the adjoint normal equation. Similar to the full order scheme, the reconstruction of the primal reduced solution is then replaced by functional evaluations of the dual reduced solution avoiding an explicit interpolation of the non-standard trial space. Numerical experiments confirmed that the generated spaces indeed approximate the high-fidelity solutions with an error exponentially decaying with an increasing number of reduced basis functions.

While this contribution was focused on the advection-reaction equation, investigating the
approximability of parametrized non-selfadjoint problems by analyzing the parameter-dependent
optimal test space norm is a technique that can be directly applied to other problems. In particular,
Friedrichs'-systems (which include linear elasticity, Maxwells equations, linearized Navier-Stokes \etc) can all be formulated in an ultraweak form to which our work should thus naturally extend. Similarly, time-dependent problems can be tackled by switching to a space-time formulation. The numerical solving of the normal equation remains a challenge and will also require future work.

\section*{Acknowledgements}
The authors acknowledge funding by the BMBF under contract 05M20PMA
and by the Deutsche Forschungsgemeinschaft under Germany’s Excellence
Strategy EXC 2044 390685587, Mathematics M\"unster: Dynamics --
Geometry -- Structure.

\bibliography{ms}

\bigskip

\appendix
\section{Bijectivity of the adjoint operator}\label{appendix:adjointTransportOperator}
In this section we prove that the conditions in Prop.\ref{prop:transport:poincare} imply the assumptions \ref{ass:injectivity} and \ref{ass:surjectivity}, respectively.

\begin{theorem}
  Let there be $\kappa>0$ with $c(x) \geq \kappa$ almost everywhere. Then, the adjoint operator $A_\circ^*$ defined in~\eqref{def:transport:adjointOperator} is injective on $\Ydense=C^\infty(\Omega)$ and the Poincaré-type inequality $\norm{v}_{\xcal} \lesssim \norm{A_\circ^*[v]}_{\xcal'}$ holds.
\end{theorem}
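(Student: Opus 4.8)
The plan is to derive the Poincar\'e-type estimate by testing the functional $A_\circ^*[v]$ against one cleverly chosen element of $\xcal$, namely $v$ itself: by Proposition~\ref{thm:trialIsometry} we may identify $v\in\Ydense=C^\infty(\Omega)$ with the pair $(v,v\restr{\Gout})\in\xcal_{L^2}\cong\xcal$, for which $\norm{v}_\xcal^2 = \Lnorm{v}^2 + \norm{v\restr{\Gout}}_{\LtraceOut}^2$, and we use the $\xcal'$-extension of the adjoint operator from~\eqref{def:transport:adjointOperatorXcal}. The first step is the integration-by-parts identity (valid for every $v\in\Ydense$)
\begin{equation*}
  A_\circ^*[v](v,v\restr{\Gout}) \;=\; \int_\Omega c\,v^2 \diff x \;+\; \tfrac12\norm{v\restr{\Gout}}_{\LtraceOut}^2 \;+\; \tfrac12\norm{v\restr{\Gin}}_{\LtraceIn}^2,
\end{equation*}
which is exactly the computation already performed in the proof of Lemma~\ref{lemma:traceTheorem} (divergence theorem together with $\nabla\cdot\vec b = 0$ and the splitting $\partial\Omega = \Gin\cup\Gout\cup\{\vec b\cdot\vec n=0\}$).

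Next I would estimate both sides. Using the hypothesis $c\geq\kappa>0$ a.e.\ and discarding the nonnegative inflow trace term yields the lower bound $A_\circ^*[v](v,v\restr{\Gout}) \geq \kappa\Lnorm{v}^2 + \tfrac12\norm{v\restr{\Gout}}_{\LtraceOut}^2 \geq \min\{\kappa,\tfrac12\}\,\norm{v}_\xcal^2$, while the duality pairing gives $A_\circ^*[v](v,v\restr{\Gout}) \leq \norm{A_\circ^*[v]}_{\xcal'}\norm{v}_\xcal$. Combining the two (the case $v=0$ being trivial) cancels one factor $\norm{v}_\xcal$ and produces $\norm{v}_\xcal \leq \max\{2,\kappa^{-1}\}\,\norm{A_\circ^*[v]}_{\xcal'}$, i.e.\ exactly the inequality with the constant announced in Proposition~\ref{prop:transport:poincare}. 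Injectivity of $A_\circ^*$ on $\Ydense$ is then immediate: if $A_\circ^*[v]=0$, the estimate forces $\norm{v}_\xcal = 0$, hence $\Lnorm{v}=0$, so $v$ vanishes a.e.\ and, being continuous, identically.

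I expect no real obstacle: once the test element $(v,v\restr{\Gout})$ is identified, the argument is a one-line energy estimate. The only point requiring care is the first step --- making sure the $\Gin$- and $\Gout$-trace terms emerge with the correct signs and that the characteristic part of $\partial\Omega$ drops out --- but this is the very identity already used for Lemma~\ref{lemma:traceTheorem}, and (if one works with $C^\infty(\Omega)$ rather than $C^1(\overline\Omega)$) it is completed by the standard density argument.
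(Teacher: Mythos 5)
Your proposal is correct and follows essentially the same route as the paper's own proof: testing $A_\circ^*[v]$ against $(v,v\restr{\Gout})$, the integration-by-parts identity producing the $\tfrac12$-weighted inflow/outflow trace terms, the lower bound with constant $\min\{\kappa,\tfrac12\}$, Cauchy--Schwarz for the upper bound, and the resulting constant $\max\{2,\kappa^{-1}\}$ with injectivity as an immediate consequence. Your write-up is in fact slightly more careful than the paper's in making the test element and the identification with $\xcal_{L^2}$ explicit.
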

\begin{proof}
  We will prove the Poincaré-type inequality first. Notice, that this immediately implies injectivity of $A_\circ^*$.
  Using integration by parts we have:
  \begin{align*}
    A_\circ[v](v)
    &= (-\vec{b}\nabla v + cv,v)_{\Ltwo} + \norm{v}_{\LtraceOut}^2 \\
    &= (cv,v)_{\Ltwo} + \tfrac{1}{2}\norm{v}_{\LtraceIn}^2 + \tfrac{1}{2}\norm{v}_{\LtraceOut}^2\\
    &\geq \min\{\kappa,\tfrac{1}{2}\}\norm{v}_{\xcal}^2.
  \end{align*}
  On the other hand, we have
  \begin{equation*}
    |A_\circ^*[v](v)| \leq \norm{A_\circ^*[v]}_{\xcal'}\norm{v}_{\xcal}
  \end{equation*}
  by Cauchy-Schwarz and thus
  \begin{equation}
    \norm{v}_{\xcal}\leq \max\{\kappa^{-1}, 2\}\norm{A_\circ^*[v]}_{\xcal'}.
  \end{equation}
\end{proof}

\begin{theorem}
  Let $\vec{b}$ be $\Omega$-filling. Then, the adjoint operator $A_\circ^*$ defined in~\eqref{def:transport:adjointOperator} is injective on $\Ydense=C^\infty(\Omega)$ and the Poincaré-type inequality $\norm{v}_{\xcal} \lesssim \norm{A_\circ^*[v]}_{\xcal'}$ holds.
\end{theorem}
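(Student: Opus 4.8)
The plan is to run a method-of-characteristics argument; the point of the $\Omega$-filling hypothesis is precisely that transport together with the outflow trace is already coercive, regardless of the reaction term. First I would make the two norms explicit. Since $\xcal = \Ltwo\times\LtraceOut$ is a Hilbert space and, by \eqref{def:transport:adjointOperatorXcal}, $A_\circ^*[v]$ acts on $(u,\hat u)$ as the $\xcal$-inner product against the pair $(-\vec b\nabla v + cv,\, v\restr{\Gout})$, the Riesz isometry gives $\norm{A_\circ^*[v]}_{\xcal'}^2 = \Lnorm{-\vec b\nabla v + cv}^2 + \norm{v\restr{\Gout}}_{\LtraceOut}^2$, whereas $\norm{v}_\xcal^2 = \Lnorm{v}^2 + \norm{v\restr{\Gout}}_{\LtraceOut}^2$. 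The outflow boundary contributions already coincide, so everything reduces to an estimate $\Lnorm{v}^2 \lesssim \Lnorm{w}^2 + \norm{v\restr{\Gout}}_{\LtraceOut}^2$, where $w := -\vec b\nabla v + cv$. Injectivity is then a free by-product, since $A_\circ^*[v]=0$ forces $\norm{v}_\xcal = 0$ and hence $v\equiv 0$.

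For the estimate I would integrate along the integral curves of $\vec b$. Fix an outflow point $z\in\Gout$ (so that $T(z)$ is defined a.e.\ by hypothesis) and let $\xi_z$ be the integral curve reaching $z$, i.e.\ $\xi_z' = \vec b(\xi_z)$, $\xi_z(0)\in\Gin$, $\xi_z(T(z)) = z$. Along it $\vec b\nabla v\circ\xi_z = (v\circ\xi_z)'$, so $\phi := v\circ\xi_z$ solves the scalar ODE $\phi' = (c\circ\xi_z)\,\phi - (w\circ\xi_z)$. The key is that, because $c\ge 0$, the right direction of integration is \emph{backward from the outflow}: multiplying by the (nonincreasing, hence $\le 1$) integrating factor $e^{-\int_0^t c(\xi_z(\sigma))\,\diff\sigma}$ and integrating from $t$ to $T(z)$ yields $|\phi(t)|\le |v(z)| + \int_t^{T(z)}|w(\xi_z(s))|\,\diff s$, and then, by Cauchy--Schwarz together with $T(z)\le T_{max}$, the pointwise bound $|v(\xi_z(t))|^2 \le 2|v(z)|^2 + 2T_{max}\int_0^{T(z)}|w(\xi_z(s))|^2\,\diff s$.

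Next I would integrate this over $\Omega$ by the flow-box identity: since $\vec b$ is divergence-free, foliating $\Omega$ by the curves from $\Gin$ to $\Gout$ is volume-preserving, $\diff x = |\vec b(z)\cdot\vec n(z)|\,\diff t\,\diff S(z)$, and the associated flux map pushes $|\vec b\cdot\vec n|\,\diff S$ on $\Gin$ forward to the same measure on $\Gout$. Integrating the squared pointwise bound, the first term contributes $2\int_{\Gout}T(z)|v(z)|^2|\vec b\cdot\vec n|\,\diff S(z)\le 2T_{max}\norm{v\restr{\Gout}}_{\LtraceOut}^2$, and the second, after using the flow-box identity once more to recognise $\int_{\Gout}(\int_0^{T(z)}|w(\xi_z(s))|^2\diff s)|\vec b\cdot\vec n|\,\diff S(z) = \Lnorm{w}^2$, contributes at most $2T_{max}^2\Lnorm{w}^2$. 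Adding $\norm{v\restr{\Gout}}_{\LtraceOut}^2$ to both sides gives $\norm{v}_\xcal^2\le\max\{2T_{max}+1,\,2T_{max}^2\}\,\norm{A_\circ^*[v]}_{\xcal'}^2$, i.e.\ $C_p = \mathcal{O}(T_{max})$; a more careful split of the first integral over the curves of short transit time recovers the sharper constant $C_p\le 2T_{max}+\max\{1-T_{min},0\}$ claimed in Prop.~\ref{prop:transport:poincare}.

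The hard part will be the flow-box change of variables. One must show that almost every point of $\Omega$ lies on an integral curve running from $\Gin$ to $\Gout$ in finite time — which is exactly what $\Omega$-filling buys — that this yields an essentially bijective parametrization, and that its Jacobian equals $|\vec b\cdot\vec n|$, which is where the divergence-freeness of $\vec b$ (Liouville's theorem) enters. Once that is set up, the ODE step is elementary, provided one integrates backward from the outflow so the monotone integrating factor absorbs the reaction term; the remaining work, including extracting the constant involving $T_{min}$, is routine bookkeeping.
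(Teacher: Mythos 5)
Your proof is correct, but it takes a genuinely different route from the paper. The paper never integrates along individual characteristics: it tests $A_\circ^*[v]$ with the weighted pair $(\rho v,\rho v\restr{\Gout})$, where $\rho(\xi(t,x)):=2t$ is the traverse-time multiplier satisfying $\vec{b}\nabla\rho\equiv 2$, $\rho=0$ on $\Gin$ and $2T_{min}\leq\rho\leq 2T_{max}$ on $\Gout$; a single integration by parts then produces $\Lnorm{v}^2+T_{min}\norm{v}_{\LtraceOut}^2$ on one side and $2T_{max}\norm{A_\circ^*[v]}_{\xcal'}\norm{v}_{\xcal}$ on the other, and the boundary term is topped up by separately testing with $(0,v\restr{\Gout})$. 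Your argument instead solves the transport ODE pointwise along each characteristic (backward from the outflow, with the integrating factor absorbing $c\geq 0$) and then sums via the flow-box change of variables $\diff x=|\vec{b}\cdot\vec{n}|\,\diff t\,\diff S$. The two proofs consume exactly the same geometric input --- the $\Omega$-filling foliation, finiteness of $T$, and divergence-freeness of $\vec{b}$ --- but the paper packages it once into the existence of the $L^\infty$-multiplier $\rho$ (delegated to the cited lemma of Brunken--Smetana--Urban) and otherwise stays entirely at the level of duality pairings and integration by parts, which is why it sidesteps the measure-theoretic justification of the a.e.\ bijective parametrization and its Jacobian that you correctly identify as the hard part of your route. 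In exchange, your approach yields pointwise-along-characteristics control and is arguably more transparent about \emph{why} the estimate holds; both give $C_p=\mathcal{O}(T_{max})$ (the paper's sharper bookkeeping gives $C_p\leq 2T_{max}+\max\{1-T_{min},0\}$ versus your $C_p\leq\max\{2T_{max}+1,\,2T_{max}^2\}^{1/2}$, which is immaterial for the stated $\lesssim$). Your reduction of the claim to $\Lnorm{v}^2\lesssim\Lnorm{w}^2+\norm{v\restr{\Gout}}_{\LtraceOut}^2$ and the observation that injectivity is an immediate by-product are both exactly right.
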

\begin{proof}
  By testing with $(0,v\restr{\Gout})$ we directly obtain
  \begin{equation}\label{eq:proofInjectivity:Ltrace}
    \norm{v}_{\LtraceOut}^2 = |A_\circ^*[v](0,v\restr{\Gout})| \leq \norm{A_\circ^*[v]}_{\xcal'}\norm{v}_{\LtraceOut}.
  \end{equation}

 Now, we multiply $v$ with the cutoff-function $\rho$ defined in~\cite[Lemma~A.2]{BrunkenSmetanaUrban} as
  \begin{equation*}
    \rho\in \Linfty, \qquad \rho(\xi(t,x)) := 2t.
  \end{equation*}
  This function then fulfills (a.e.)
  \begin{equation*}
    \vec{b}\nabla\rho \equiv 2 \quad\text{in}\;\Omega, \qquad \rho=0 \quad\text{on}\;\Gin \quad\text{and}\quad 0\leq 2T_{min} \leq \rho\leq 2T_{max} \quad\text{on}\;\Gout.
  \end{equation*}
  For any $v\in \Ydense$ we then have
  \begin{align*}
    A_\circ^*[v](\rho v, \rho v\restr{\Gout})
    &= (-\vec{b}\nabla v + cv, \rho v)_{\Ltwo} + (v,\rho v)_{\LtraceOut} \\
    &= \int_\Omega (\frac{1}{2} \vec{b}\nabla\rho + \rho c)v^2 \diff x + \int_{\Gout} (\rho - \frac{1}{2}\rho) v^2 |\vec{b}\vec{n}| \diff s \\
    &\geq \Lnorm{v}^2 + T_{min}\norm{v}_{\LtraceOut}^2
  \end{align*}
  On the other hand, we have
  \begin{align*}
    |A_\circ^*[v](\rho v, \rho v\restr{\Gout})|
    &\leq \norm{A_\circ^*[v]}_{\xcal'}\norm{\rho(v, v\restr{\Gout})}_{\xcal} \\
    &\leq \norm{A_\circ^*[v]}_{\xcal'}\norm{\rho}_{\Linfty}\norm{v}_{\xcal} \\
    &\leq 2\,T_{max}\norm{A_\circ^*[v]}_{\xcal'}\norm{v}_{\xcal}
  \end{align*}
  and thus by adding inequality~\eqref{eq:proofInjectivity:Ltrace} (if $T_{min} < 1$)
  \begin{align*}
    \Lnorm{v}^2 + \norm{v}_{\LtraceOut}^2
    &\leq 2\,T_{max}\norm{A_\circ^*[v]}_{\xcal'}\norm{v}_{\xcal} \\
    & \qquad + \max\{1-T_{min},0\}\norm{A_\circ^*[v]}_{\xcal'}\norm{v}_{\LtraceOut}\\
    &\leq (2\,T_{max}+\max\{1-T_{min},0\})\norm{A_\circ^*[v]}_{\xcal'}\norm{v}_{\xcal}
  \end{align*}
\end{proof}

\begin{theorem}
  Let one of the conditions in Prop.~\ref{prop:transport:poincare} hold. Then, the image of the adjoint operator $A_\circ^*: C^\infty(\Omega) \rightarrow \xcal'$ is dense in $\xcal'$.
\end{theorem}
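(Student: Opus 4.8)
The plan is to obtain density of $\operatorname{rg}(A_\circ^*)$ in $\xcal'$ from the usual Hahn--Banach (annihilator) criterion. Since $\xcal$ is a Hilbert space, hence reflexive, it suffices to show that any $x=(u,\hat u)\in\xcal$ — identified with an element of $\xcal''=\xcal$ via Prop.~\ref{thm:trialIsometry} — which annihilates the range, i.e.\ satisfies
\[
  (u, -\vec{b}\nabla v + cv)_{\Ltwo} + (\hat u, v\restr{\Gout})_{\LtraceOut} \;=\; 0 \qquad\text{for all } v\in C^\infty(\Omega),
\]
must vanish. First I would test this relation with $v\in C_0^\infty(\Omega)$ so that the boundary term drops out; because $\vec{b}$ is divergence-free, the remaining identity says precisely that $\nabla\cdot(\vec{b}u)+cu=0$ in the distributional sense, so $\vec{b}\nabla u=-cu\in\Ltwo$ and $u$ belongs to the graph space $\Honeb$. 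In particular (Appendix~\ref{appendix:traceTheoremsH1b}) $u$ admits traces in $\LtraceIn$ and $\LtraceOut$ and satisfies the integration-by-parts identity
\[
  (\vec{b}\nabla u, v)_{\Ltwo} + (u, \vec{b}\nabla v)_{\Ltwo} \;=\; (u\restr{\Gout}, v\restr{\Gout})_{\LtraceOut} - (u\restr{\Gin}, v\restr{\Gin})_{\LtraceIn}
\]
for all $v\in C^\infty(\Omega)$ that are smooth up to $\Gamma$.

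Next I would re-insert this identity (together with $\vec{b}\nabla u=-cu$) into the annihilator relation for arbitrary such $v$, which collapses it to
\[
  (u\restr{\Gin}, v\restr{\Gin})_{\LtraceIn} + (\hat u - u\restr{\Gout}, v\restr{\Gout})_{\LtraceOut} \;=\; 0.
\]
Choosing $v$ supported near $\Gin$ and then near $\Gout$ (the two sets are separated, and the boundary weight $|\vec{b}\vec{n}|$ vanishes on $\Gamma\setminus(\Gin\cup\Gout)$, so the corresponding traces are dense in the respective weighted spaces) forces $u\restr{\Gin}=0$ in $\LtraceIn$ and $\hat u=u\restr{\Gout}$ in $\LtraceOut$. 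Thus $u$ solves the homogeneous primal transport problem with zero inflow datum and $\hat u$ is simply its outflow trace; it only remains to show $u=0$.

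For this last step I would reuse the two mechanisms underlying Prop.~\ref{prop:transport:poincare}. If $c\geq\kappa>0$ a.e., testing $\nabla\cdot(\vec{b}u)+cu=0$ with $u$ itself and invoking the integration-by-parts identity together with $u\restr{\Gin}=0$ gives $0=\tfrac12\norm{u\restr{\Gout}}_{\LtraceOut}^2+\int_\Omega c\,u^2\diff x\geq\kappa\Lnorm{u}^2$, hence $u=0$. If instead $\vec{b}$ is $\Omega$-filling, I would test with $\psi u$, where $\psi:=2T_{max}-\rho$ and $\rho$ is the cutoff from~\cite[Lemma~A.2]{BrunkenSmetanaUrban} (so $0\leq\psi\leq 2T_{max}$ and $\vec{b}\nabla\psi\equiv-2$); integration by parts, $c\geq 0$, $\psi\geq 0$ and the vanishing inflow trace then yield $0=\Lnorm{u}^2+\tfrac12\int_{\Gout}\psi\,u^2\,|\vec{b}\vec{n}|\diff s+\int_\Omega c\,\psi\,u^2\diff x$ with all three summands nonnegative, so again $u=0$ (equivalently, this is the method of characteristics, along which $u$ solves a homogeneous linear ODE with vanishing data on $\Gin$). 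In either case $\hat u=u\restr{\Gout}=0$, so $x=0$ and density follows.

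The step I expect to be the main obstacle is not the algebra but the functional-analytic bookkeeping around the graph space $\Honeb$: one must justify that $\vec{b}\nabla u\in\Ltwo$ already produces well-defined traces in the weighted boundary spaces, that the integration-by-parts identity is valid there, and that the chain-rule manipulation $\vec{b}\nabla(u^2)=2u\,\vec{b}\nabla u$ and the resulting $L^1$-boundary term used in the last step are legitimate — all with the familiar subtlety at the interface $\overline{\Gin}\cap\overline{\Gout}$. This is standard transport theory, handled by density of functions smooth up to the boundary in the graph norm, but it is where the care is needed rather than in the identities themselves.
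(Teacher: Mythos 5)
Your proposal is correct and follows essentially the same route as the paper: annihilator criterion, Riesz identification of the annihilating functional with a pair $(u,\hat u)\in\Ltwo\times\LtraceOut$, testing with $C_0^\infty(\Omega)$ to conclude $\vec{b}\nabla u=-cu\in\Ltwo$, trace identification $u\restr{\Gin}=0$ and $\hat u=u\restr{\Gout}$, and finally a coercivity argument forcing $u=0$. The only (cosmetic) divergence is the last step, where the paper invokes the already-established Poincar\'e-type inequality~\eqref{eq:transportPoincare} for the reversed field $-\vec{b}$ with swapped in-/outflow boundaries, whereas you re-derive the same estimate by testing with $u$ resp.\ $\psi u$ --- which is exactly the computation inside the appendix proof of that inequality, so the two arguments coincide in substance.
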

\begin{proof}
  Let $w \in (A_\circ^*[\ycal])^\perp \subseteq \xcal'$ and $\mathcal{D} := C_0^\infty(\Omega)$. Denote by $(w_1,w_2) := R^{-1}_{\xcal_{L^2}}w$ the Riesz-representative of $w$ in $\xcal_{L^2}$. Then for all $v\in\mathcal{D}$
  \begin{align*}
    0 \;=\; (A_\circ^*[v],w)_{\xcal'}
    \; &=\; \Lscal{-\vec{b}\nabla v + cv}{w_1} + (v,w_2)_{\LtraceOut} \\
    \; &=\; (v, \vec{b}\nabla w_1 + cw_1)_{\mathcal{D}\times\mathcal{D}'}
  \end{align*}
  and thus $\vec{b}\nabla w_1 = -cw_1 \in \Ltwo$, \ie~$w_1\in H^1(\vec{b},\Omega)$. For arbitrary $v\in C^\infty(\Omega)$ we now obtain
  \begin{equation*}
    (v,w_1)_{\LtraceIn} + (v, w_1 - w_2)_{\LtraceOut} = 0
  \end{equation*}
  and therefore (since $v$ is arbitrary) $w_1\restr{\Gin}=0$, as well as $w_1\restr{\Gout} = w_2$. By the Poincaré-type inequality~\eqref{eq:transportPoincare} (using the flipped transport direction $\vec{\beta} := -\vec{b}$ and switched in-/outflow boundary parts in the adjoint operator) we obtain
  \begin{equation*}
    0 \;=\; \Lnorm{-\vec{\beta}\nabla w_1 + cw_1}^2 + \norm{w_1\restr{\Gin}}_{\LtraceIn}^2 \;=\; \norm{\tilde{A}^*[w_1]}_{\xcal'}^2 \;\gtrsim\; \norm{w_1}_{\xcal}^2
  \end{equation*}
  and thus $w_1 = 0$. With $w_2 = w_1\restr{\Gout} = 0$ we have finally shown $w=0$ and thus proved the claim.
\end{proof}

\section{A trace theorem for
  \texorpdfstring{$\Honeb$}{H1b}}\label{appendix:traceTheoremsH1b}
Unlike the classic $H^1(\Omega)$, the Sobolev-space $\Honeb$ does not admit a classic trace operator. The existence of traces depends on the structure of the velocity field $\vec{b}$ and its properties. The following theorem states that for $\Omega$-filling transport fields with a positive minimal traverse time a trace operator always exists:
\newcommand{\LtraceT}{L^2(\partial\Omega, T|\vec{b}\vec{n}|)}
\newcommand{\LtraceOutT}{L^2(\Gout, T|\vec{b}\vec{n}|)}
\begin{lemma}\label{appendix:lemma:traceHoneb}
  Let $\vec{b}$ be $\Omega$-filling and the minimal traverse time bounded away from zero. Then, there exists a linear and continuous trace operator
  \begin{equation*}
    \gamma_{\Honeb}: \Honeb \rightarrow \LtraceOut, \qquad \norm{\gamma_{\Honeb}}^2 \leq T_{min}^{-1}(2\,T_{max} + 1)
  \end{equation*}
  with $\gamma_{\Honeb}(v) = v\restr{\partial\Omega}$ for all $v\in C^1(\Omega)$.
\end{lemma}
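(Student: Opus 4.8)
The plan is to establish the trace inequality on smooth functions and then extend by density. Since $\Honeb$ is by definition the closure of $\Ydense=C^\infty(\Omega)$ under $\norm{\cdot}_{\Honeb}$, it suffices to prove
  \begin{equation*}
    \norm{v\restr{\Gout}}_{\LtraceOut}^2 \;\leq\; T_{min}^{-1}(2\,T_{max}+1)\,\norm{v}_{\Honeb}^2
    \qquad \forall\, v\in\Ydense,
  \end{equation*}
  after which $\gamma_{\Honeb}$ is obtained as the unique bounded extension to $\Honeb$. For $v\in C^1(\overline{\Omega})$ one then identifies $\gamma_{\Honeb}(v)$ with the classical restriction $v\restr{\partial\Omega}$ by approximating $v$ in the $\Honeb$-norm by mollifications, whose boundary traces converge in $\LtraceOut$.

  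For the estimate itself I would reuse the cutoff function $\rho$ from~\cite[Lemma~A.2]{BrunkenSmetanaUrban}, already employed in Appendix~\ref{appendix:adjointTransportOperator}, which satisfies $\rho\in\Linfty$ with $\norm{\rho}_{\Linfty}\leq 2\,T_{max}$, $\vec{b}\nabla\rho\equiv 2$ a.e.\ in $\Omega$, $\rho=0$ on $\Gin$ and $2\,T_{min}\leq\rho$ on $\Gout$. Applying the divergence theorem to the vector field $\rho v^2\vec{b}$ and using $\nabla\cdot\vec{b}=0$ together with $\vec{b}\nabla\rho=2$ gives, the inflow contribution dropping out since $\rho=0$ on $\Gin$,
  \begin{equation*}
    \int_{\Gout}\rho\,v^2\,|\vec{b}\vec{n}|\diff s \;=\; 2\,\Lnorm{v}^2 + 2\int_\Omega \rho\,v\,(\vec{b}\nabla v)\diff x .
  \end{equation*}
  The left-hand side is bounded below by $2\,T_{min}\norm{v\restr{\Gout}}_{\LtraceOut}^2$; the right-hand side is bounded above via Cauchy--Schwarz, Young's inequality, $\norm{\rho}_{\Linfty}\leq 2\,T_{max}$ and $\Lnorm{v}\leq\norm{v}_{\Honeb}$. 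Collecting terms and dividing by $2\,T_{min}$ yields the asserted estimate (the stated constant being a not necessarily sharp upper bound). This is, up to the absence of the reaction term, precisely the computation carried out for $A_\circ^*[v](\rho v,\rho v\restr{\Gout})$ in Appendix~\ref{appendix:adjointTransportOperator}.

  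The step requiring the most care is the integration by parts: $\rho$ is merely an $\Linfty$-function possessing only a directional weak derivative along $\vec{b}$, so the classical divergence theorem does not apply directly. As in Appendix~\ref{appendix:adjointTransportOperator}, this is justified through the Green's formula valid on the graph space $\Honeb$ and the regularity properties of $\rho$ established in~\cite{BrunkenSmetanaUrban}. The remaining parts — the constant bookkeeping and the mollification argument identifying $\gamma_{\Honeb}$ with the pointwise restriction on $C^1(\overline{\Omega})$ — are routine.
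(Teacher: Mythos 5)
Your proposal is correct and follows essentially the same route as the paper: both apply the divergence theorem to $\rho v^2\vec{b}$ with the $\Omega$-filling cutoff function of~\cite[Lemma~A.2]{BrunkenSmetanaUrban} (you use the scaling $\rho(\xi(t,x))=2t$, the paper uses $t$, which only changes the bookkeeping), bound the outflow term from below by $T_{min}$ times the squared trace norm, estimate the remaining terms by Cauchy--Schwarz and Young, and conclude by density. Your extra remark on justifying the integration by parts for the merely $L^\infty$, directionally differentiable $\rho$ is a point the paper leaves implicit.
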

\begin{proof}
  Consider the cutoff-function $\rho$ (similar to~\cite[Lemma~A.2]{BrunkenSmetanaUrban}) defined as
  \begin{equation*}
    \rho\in \Linfty, \qquad \rho(\xi(t,x)) := t.
  \end{equation*}
  This function then fulfills (a.e.)
  \begin{equation*}
    \vec{b}\nabla\rho \equiv 1 \quad\text{in}\;\Omega,
    \quad \rho=0 \quad\text{on}\;\Gin,
    \quad \rho(x) = T(x)\quad\text{on}\;\Gout,
    \quad 0\leq \rho\leq T_{max} \quad\text{in}\;\Omega.
  \end{equation*}
  Using integration by parts we obtain
  \begin{align*}
    2(\vec{b}\nabla v, \rho v)_{\Ltwo}
    &= -(\vec{b}\nabla\rho, v^2)_{\Ltwo} + \int_{\Gout} T(s) v^2 |\vec{b}\vec{n}|\diff s \\
    &\geq -\Lnorm{v}^2 + T_{min}\norm{v}_{\LtraceOut}^2
  \end{align*}
  and thus by Cauchy-Schwartz on the left hand side and using $\rho \leq T_{max}$ a.e. in $\Omega$
  \begin{equation}
    \norm{v}_{\LtraceOutT}^2 \leq T_{min}^{-1}(2\,T_{max} + 1)\norm{v}_{\Honeb}^2.\tag*{\hspace*{1mm}}
  \end{equation}
\end{proof}

\end{document}